\documentclass[onefignum,onetabnum]{siamonline190516}



\usepackage{lipsum}
\usepackage{amsfonts}
\usepackage{graphicx}
\usepackage{epstopdf}
\usepackage{algorithmic}
\usepackage{multirow}
\ifpdf
  \DeclareGraphicsExtensions{.eps,.pdf,.png,.jpg}
\else
  \DeclareGraphicsExtensions{.eps}
\fi

\usepackage{enumitem}
\setlist[enumerate]{leftmargin=.5in}
\setlist[itemize]{leftmargin=.5in}


\newsiamremark{remark}{Remark}
\newsiamremark{assumption}{Assumption}
\newsiamremark{hypothesis}{Hypothesis}
\crefname{hypothesis}{Hypothesis}{Hypotheses}
\newsiamthm{claim}{Claim}

\newcommand{\EE}{\mathbb{E}}
\newcommand{\PP}{\mathbb{P}}
\newcommand{\VV}{\mathbb{V}}

\newcommand{\ZZ}{\mathbb{Z}}
\newcommand{\EVPI}{\mathrm{EVPI}}
\newcommand{\EVPPI}{\mathrm{EVPPI}}
\newcommand{\EVSI}{\mathrm{EVSI}}
\newcommand{\MLMC}{\mathrm{MLMC}}
\newcommand{\NMC}{\mathrm{NMC}}
\newcommand{\opt}{\mathrm{opt}}

\newcommand{\bsone}{\boldsymbol{1}}
\newcommand{\bgd}{{\overline{g_d}}}
\newcommand{\bmaxfrhod}{{\overline{\max_{d\in D}f_d(\cdot)\rho(Y\,|\,\cdot)}}}
\newcommand{\bfrhod}{{\overline{f_d(\cdot)\rho(Y\,|\,\cdot)}}}
\newcommand{\brhoY}{{\overline{\rho(Y\,|\,\cdot)}}}
\newcommand{\fracs}[2]{{\textstyle \frac{#1}{#2}}}

\allowdisplaybreaks

\headers{MLMC estimation of the expected value of sample information}{T. Hironaka, M. B. Giles, T. Goda, and H. Thom}

\title{Multilevel Monte Carlo estimation of the expected value of sample information\thanks{Submitted to the editors DATE.}}

\author{Tomohiko Hironaka\thanks{School of Engineering, University of Tokyo, Tokyo, JPN (\email{hironaka-tomohiko@g.ecc.u-tokyo.ac.jp}, \email{goda@frcer.t.u-tokyo.ac.jp}).}
\and Michael B. Giles\thanks{Mathematical Institute, University of Oxford, Oxford, UK (\email{mike.giles@maths.ox.ac.uk}).}
\and Takashi Goda\footnotemark[2]
\and Howard Thom\thanks{Bristol Medical School, University of Bristol, Bristol, UK (\email{howard.thom@bristol.ac.uk}).}
}

\usepackage{amsopn}


\begin{document}

\maketitle

\begin{abstract}
We study Monte Carlo estimation of the expected value of sample information (EVSI) which measures the expected benefit of gaining additional information for decision making under uncertainty. EVSI is defined as a nested expectation in which an outer expectation is taken with respect to one random variable $Y$ and an inner conditional expectation with respect to the other random variable $\theta$. Although the nested (Markov chain) Monte Carlo estimator has been often used in this context, a root-mean-square accuracy of $\varepsilon$ is achieved notoriously at a cost of $O(\varepsilon^{-2-1/\alpha})$, where $\alpha$ denotes the order of convergence of the bias and is typically between $1/2$ and $1$. In this article we propose a novel efficient Monte Carlo estimator of EVSI by applying a multilevel Monte Carlo (MLMC) method. Instead of fixing the number of inner samples for $\theta$ as done in the nested Monte Carlo estimator, we consider a geometric progression on the number of inner samples, which yields a hierarchy of estimators on the inner conditional expectation with increasing approximation levels. Based on an elementary telescoping sum, our MLMC estimator is given by a sum of the Monte Carlo estimates of the differences between successive approximation levels on the inner conditional expectation. We show, under a set of assumptions on decision and information models, that successive approximation levels are tightly coupled, which directly proves that our MLMC estimator improves the necessary computational cost to optimal $O(\varepsilon^{-2})$. Numerical experiments confirm the considerable computational savings as compared to the nested Monte Carlo estimator.
\end{abstract}

\begin{keywords}
  expected value of sample information, multilevel Monte Carlo, nested expectations, decision-making under uncertainty
\end{keywords}

\begin{AMS}
  65C05, 62P10, 90B50
\end{AMS}

\section{Introduction}
Motivated by applications to medical decision making under uncertainty \cite{WSCAAbook}, we study Monte Carlo estimation of the expected value of sample information (EVSI). Let $\theta$ be a vector of random variables representing the uncertainty in the effectiveness of different medical treatments. Let $D$ be a finite set of possible medical treatments, and for each treatment $d\in D$, $f_d$ denotes a function of $\theta$ representing some measure of the patient outcome with ``larger the better'', where quality-adjusted life years (QALY) is typically employed in the context of medical decision making \cite{ALC04,BKOC07,SOBB15,HMB16}. Without any knowledge about $\theta$, the best treatment is the one which maximizes the expectation of $f_d$, giving the average outcome:
\begin{align}\label{eq:prior_ev}
 \max_{d\in D} \EE_\theta\left[ f_d(\theta)\right],
\end{align}
where $\EE_{\theta}[\cdot]$ denotes the expectation taken with respect to the prior probability density function of $\theta$. On the other hand, if perfect information on $\theta$ is available, the best treatment after knowing the value of $\theta$ is simply the one which maximizes $f_d(\theta)$, so that, on average, the outcome will be
\[ \EE_\theta\left[ \max_{d\in D} f_d(\theta)\right]. \]
The difference between these two values is called the expected value of perfect information (EVPI):
\[ \EVPI := \EE_\theta\left[ \max_{d\in D} f_d(\theta)\right]- \max_{d\in D} \EE_\theta\left[ f_d(\theta)\right]. \]
However, it will be rare that we have access to perfect information on $\theta$. In practice, what we obtain, for instance, through carrying out some new medical research is either \emph{partial perfect information} or \emph{sample information} on $\theta$.

Partial perfect information is nothing but perfect information only on a subset of random variables $\theta_1$ for a partition $\theta=(\theta_1,\theta_2)$, where $\theta_1$ and $\theta_2$ are assumed independent. After knowing the value of $\theta_1$, the best treatment is the one which maximizes the partial expectation $\EE_{\theta_2}\left[ f_d(\theta_1,\theta_2)\right]$. Therefore, the average outcome with partial perfect information on $\theta_1$ will be
\[ \EE_{\theta_1}\left[ \max_{d\in D}\EE_{\theta_2}\left[ f_d(\theta_1,\theta_2)\right] \right], \]
and the increment from \eqref{eq:prior_ev} is called the expected value of partial perfect information (EVPPI):
\[ \EVPPI := \EE_{\theta_1}\left[ \max_{d\in D}\EE_{\theta_2}\left[ f_d(\theta_1,\theta_2)\right] \right]- \max_{d\in D} \EE_\theta\left[ f_d(\theta)\right]. \]

Sample information on $\theta$, which is of our interest in this article, is a single realization drawn from some probability distribution. To be more precise, we consider that information $Y$ is stochastically generated according to the forward information model:
\begin{align}\label{eq:information_model}
 Y = h(\theta)+\epsilon,
\end{align}
where $h$ is a known deterministic function of $\theta$ possibly with multiple outputs and $\epsilon$ is a zero-mean random variable with density $\rho$. Note that $h$ and $\epsilon$ are called the observation operator and the observation noise, respectively \cite[Section~2]{S10}. It is widely known that Bayes' theorem provides an update of the probability density of $\theta$ after observing $Y$:
\begin{align}\label{eq:Bayes}
 \pi^Y(\theta) = \frac{\rho(Y\,|\, \theta)\pi_0(\theta)}{\EE_\theta[\rho(Y\,|\, \theta)]},
\end{align}
where $\pi_0(\theta)$ denotes the prior probability density of $\theta$, $\rho(Y\,|\, \theta)$ the conditional probability density of $Y$ given $\theta$. Here $\rho(Y\,|\, \theta)$ is also called the likelihood of the information $Y$, and it follows from the model \eqref{eq:information_model} that $\rho(Y\,|\, \theta):=\rho(Y-h(\theta))$.

Now, if such sample information $Y$ is available, by choosing the best treatment which maximizes the conditional expectation $\EE_{\theta\,|\, Y}\left[ f_d(\theta)\right]$ depending on $Y$, where $\EE_{\theta\,|\, Y}[\cdot]$ denotes the expectation taken with respect to the conditional probability density $\pi^Y(\theta)$, the overall average outcome becomes
\[ \EE_{Y}\left[ \max_{d\in D}\EE_{\theta\,|\, Y}\left[ f_d(\theta)\right] \right] . \]
Then EVSI represents the expected benefit of gaining the information $Y$ and is defined by the difference
\[ \EVSI := \EE_{Y}\left[ \max_{d\in D}\EE_{\theta\,|\, Y}\left[ f_d(\theta)\right] \right] - \max_{d\in D} \EE_\theta\left[ f_d(\theta)\right] .\]

In this article we are concerned with Monte Carlo estimation of EVSI. Given that EVPI can be estimated with root-mean-square accuracy $\varepsilon$ by using $N=O(\varepsilon^{-2})$ i.i.d.\ samples of $\theta$, denoted by $\theta^{(1)},\ldots,\theta^{(N)}$, as
\[ \frac{1}{N}\sum_{n=1}^{N}\max_{d\in D} f_d(\theta^{(n)}) - \max_{d\in D} \frac{1}{N}\sum_{n=1}^{N}f_d(\theta^{(n)}), \]
it suffices to efficiently estimate the difference between EVPI and EVSI:
\begin{align}\label{eq:difference}
 \EVPI-\EVSI = \EE_\theta\left[ \max_{d\in D} f_d(\theta)\right]-\EE_{Y}\left[ \max_{d\in D}\EE_{\theta\,|\, Y}\left[ f_d(\theta)\right] \right]  .
\end{align}
Because of the non-commutativity between the operators $\EE$ and $\max_{d\in D}$, this estimation is inherently a nested expectation problem and it is far from trivial whether we can construct a good Monte Carlo estimator which achieves a root-mean-square accuracy $\varepsilon$ at a cost of $O(\varepsilon^{-2})$.

Classically the most standard approach is to apply nested (Markov chain) Monte Carlo methods. For $M, N\in \ZZ_{>0}$, let $Y^{(1)},\ldots,Y^{(N)}$ be $N$ outer i.i.d.\ samples of $Y$, and for each $1\leq n\leq N$, let $\theta^{(n,1)},\ldots,\theta^{(n,M)}$ be $M$ inner i.i.d.\ samples of $\theta$ conditional on $Y^{(n)}$. Then the nested Monte Carlo estimator of $\EVPI-\EVSI$ is given by
\begin{align}\label{eq:nmc}
 \frac{1}{N}\sum_{n=1}^{N}\left[ \frac{1}{M}\sum_{m=1}^{M}\max_{d\in D}f_d(\theta^{(n,m)})-\max_{d\in D}\frac{1}{M}\sum_{m=1}^{M}f_d(\theta^{(n,m)})\right].
\end{align}
Here it is often hard to generate inner i.i.d.\ samples of $\theta$ conditional on some value of $Y$ directly (although, conversely, it is quite easy to generate i.i.d.\ samples of $Y$ conditional on some value of $\theta$ according to \eqref{eq:information_model}). This is a major difference from estimating EVPPI.

To work around this difficulty, although the resulting samples are no longer i.i.d., one relies on Markov chain Monte Carlo (MCMC) sampling techniques such as Metropolis-Hasting sampling and Gibbs sampling, see \cite{Lbook,RCbook}. Under certain conditions, it follows from \cite{JO10,LMN13} that one can establish a non-asymptotic error bound of $O(M^{-1/2})$ for MCMC estimation of the inner conditional expectation. Still, as inferred from a recent work of Giles \& Goda \cite{GG19} on EVPPI estimation, we need $N=O(\varepsilon^{-2})$ and $M=O(\varepsilon^{-1/\alpha})$ samples for outer and inner expectations, respectively, to estimate $\EVPI-\EVSI$ with root-mean-square accuracy $\varepsilon$. Here $\alpha$ denotes the order of convergence of the bias and is typically between $1/2$ and $1$. This way the necessary total computational cost is of $O(\varepsilon^{-2-1/\alpha})$.

In this article, building upon the earlier work by Giles \& Goda \cite{GG19}, we develop a novel efficient Monte Carlo estimator of $\EVPI-\EVSI$ by using a multilevel Monte Carlo (MLMC) method \cite{G08,G15}. Although there has been extensive recent research on efficient approximations of EVSI in the medical decision making context \cite{SOBB15,HMB16,M16,JA18}, our proposal avoids function approximations on the inner conditional expectation and any reliance on assumptions of multilinearity of $f_d$ or weak correlation between random variables in $\theta$. Recently MLMC estimators have been studied intensively for nested expectations of different forms, for instance, by \cite{BHR15,GH19,GHIxx}. We also refer to \cite{G18} for a review of recent developments of MLMC applied to nested expectation problems. Importantly, our approach developed in this article does not require MCMC sampling for generating inner conditional samples of $\theta$ and can achieve a root-mean-square accuracy $\varepsilon$ at a cost of optimal $O(\varepsilon^{-2})$. Moreover, it is straightforward to incorporate importance sampling techniques within our estimator, which may sometimes reduce the variance of the estimator significantly.

\section{Multilevel Monte Carlo}\label{sec:mlmc}
\subsection{Basic theory}\label{subsec:basic}
Before introducing our estimator of $\EVPI-\EVSI$, we give an overview of the MLMC method. Let $P$ be a real-valued random variable which cannot be sampled exactly, and let $P_0,P_1,\ldots$ be a sequence of real-valued random variables which approximate $P$ with increasing accuracy but also with increasing cost. In order to estimate $\EE[P]$, we first approximate $\EE[P]$ by $\EE[P_L]$ for some $L\in \ZZ_{\geq 0}$ and then the standard Monte Carlo method estimates $\EE[P_L]$ by using i.i.d.\ samples $P_L^{(1)}, P_L^{(2)},\ldots$ of $P_L$ as
\[ \EE[P]\approx \EE[P_L]\approx \overline{P_L}^{N} := \frac{1}{N}\sum_{n=1}^{N}P_L^{(n)}. \]

On the other hand, the MLMC method exploits the following telescoping sum representation:
\[ \EE[P_L] = \EE[P_0] + \sum_{\ell=1}^{L}\EE[P_\ell-P_{\ell-1}]. \]
More generally, given a sequence of random variables $\Delta P_0,\Delta P_1,\ldots$ which satisfy
\[ \EE[\Delta P_0] = \EE[P_0]\qquad \text{and} \qquad \EE[\Delta P_\ell] = \EE[P_\ell-P_{\ell-1}] \quad \text{for $\ell\geq 1$},\]
we have
\[ \EE[P_L] = \sum_{\ell=0}^{L}\EE[\Delta P_\ell]. \]
Then the MLMC estimator is given by a sum of independent Monte Carlo estimates of $\EE[\Delta P_0], \EE[\Delta P_1], \ldots$, i.e.,
\begin{align}\label{eq:mlmc}
 Z_{\MLMC} = \sum_{\ell=0}^{L}\overline{\Delta P}^{N_\ell}_\ell = \sum_{\ell=0}^{L}\frac{1}{N_\ell}\sum_{n=1}^{N_\ell}\Delta P_\ell^{(n)}.
\end{align}
Since $P_0,P_1,\ldots$ approximate $P$ with increasing accuracy, through a tight coupling of $P_{\ell-1}$ and $P_\ell$, the variance of the correction variable $\Delta P_\ell$ is expected to get smaller as the level $\ell$ increases. This implies that the numbers of samples $N_0,N_1,\ldots$ can also get smaller as the level $\ell$ increases so as to estimate each quantity $\EE[\Delta P_\ell]$ accurately. If this is the case, the total computational cost can be reduced significantly as compared to the standard Monte Carlo method.

The following basic theorem from \cite{G08,CGST11,G15} makes the above observation explicit.
\begin{theorem}\label{thm:mlmc}
Let $P$ be a random variable, and for $\ell\in \ZZ_{\geq 0}$, let $P_\ell$ be the level $\ell$ approximation of $P$. Assume that there exist independent correction random variables $\Delta P_{\ell}$ with expected cost $C_\ell$ and variance $V_\ell$, and positive constants $\alpha,\beta,\gamma,c_1,c_2,c_3$ such that $\alpha \geq \min(\beta,\gamma)/2$ and
\begin{enumerate}
\item $\EE[\Delta P_{\ell}] = \begin{cases} \EE[P_0], & \ell=0 \\ \EE[P_\ell-P_{\ell-1}] , & \ell\geq 1\end{cases}$
\item $|\EE[P_\ell-P]|\leq c_12^{-\alpha \ell}$,
\item $V_\ell \leq c_2 2^{-\beta \ell}$,
\item $C_\ell \leq c_3 2^{\gamma \ell}$.
\end{enumerate}
Then there exists a positive constant $c_4$ such that, for any root-mean-square accuracy $\varepsilon<e^{-1}$, there are $L$ and $N_0,\ldots,N_L$ for which the MLMC estimator \eqref{eq:mlmc} achieves a mean-square error less than $\varepsilon^2$, i.e.,
\[ \EE[(Z_{\MLMC}-\EE[P])^2] \leq \varepsilon^2 \]
with a computational cost $C$ bounded above by
\[ \EE[C] \leq \begin{cases} c_4\varepsilon^{-2}, & \beta>\gamma, \\ c_4\varepsilon^{-2}(\log \varepsilon^{-1})^2, & \beta=\gamma, \\ c_4\varepsilon^{-2-(\gamma-\beta)/\alpha}, & \beta<\gamma. \end{cases} \]
\end{theorem}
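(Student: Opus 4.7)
The plan is to decompose the mean-square error of the MLMC estimator into a squared bias plus a variance term, balance each against $\varepsilon^2/2$, and then choose the number of levels $L$ and the per-level sample sizes $N_\ell$ to minimize total cost subject to the variance constraint.

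First I would observe, using independence of the correction estimators across levels together with assumption 1, that
\begin{equation*}
\EE\bigl[(Z_{\MLMC}-\EE[P])^2\bigr] = \bigl(\EE[P_L]-\EE[P]\bigr)^2 + \sum_{\ell=0}^{L}\frac{V_\ell}{N_\ell}.
\end{equation*}
By assumption 2, the first term is at most $c_1^2 2^{-2\alpha L}$, so choosing $L = \lceil \alpha^{-1}\log_2(\sqrt{2}\,c_1/\varepsilon)\rceil$ makes the bias contribution at most $\varepsilon^2/2$ and gives $L = O(\log \varepsilon^{-1})$. It then remains to pick $N_0,\ldots,N_L$ so that $\sum_\ell V_\ell/N_\ell \leq \varepsilon^2/2$ while minimizing the total cost $\sum_\ell N_\ell C_\ell$.

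Next I would use a standard Lagrange-multiplier argument to obtain the optimal allocation $N_\ell \propto \sqrt{V_\ell/C_\ell}$; concretely, taking $N_\ell = \lceil 2\varepsilon^{-2}\sqrt{V_\ell/C_\ell}\,S\rceil$ with $S = \sum_{k=0}^{L}\sqrt{V_k C_k}$ enforces the variance bound, with total cost bounded by $2\varepsilon^{-2}S^2$ plus a correction of $\sum_\ell C_\ell$ coming from the ceiling operators. Using assumptions 3 and 4, the key sum is estimated as
\begin{equation*}
S \leq \sqrt{c_2 c_3}\sum_{\ell=0}^{L} 2^{(\gamma-\beta)\ell/2},
\end{equation*}
and the three regimes split according to whether this geometric sum is convergent, critical, or divergent in $\ell$: for $\beta>\gamma$ it is $O(1)$ giving cost $O(\varepsilon^{-2})$; for $\beta=\gamma$ it is $O(L)=O(\log\varepsilon^{-1})$ yielding $O(\varepsilon^{-2}(\log\varepsilon^{-1})^2)$; and for $\beta<\gamma$ it is dominated by the top level, $O(2^{(\gamma-\beta)L/2})$, which, with $L=O(\alpha^{-1}\log_2\varepsilon^{-1})$, produces cost $O(\varepsilon^{-2-(\gamma-\beta)/\alpha})$.

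The main technical nuisance rather than a real obstacle is absorbing the ceiling correction $\sum_{\ell=0}^{L} C_\ell \leq c_3\sum_{\ell=0}^{L}2^{\gamma\ell} = O(2^{\gamma L}) = O(\varepsilon^{-\gamma/\alpha})$ into the three stated rates; this is exactly where the hypothesis $\alpha \geq \min(\beta,\gamma)/2$ is used. When $\gamma\leq\beta$ it gives $\gamma/\alpha \leq 2$, so the correction is at most $O(\varepsilon^{-2})$; when $\gamma>\beta$ it gives $\gamma/\alpha \leq 2+(\gamma-\beta)/\alpha$, so the correction is dominated by the stated bound. Collecting these estimates across the three cases, one obtains the claimed cost bound with a single constant $c_4$, completing the proof.
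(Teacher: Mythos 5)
The paper states this theorem without proof, citing \cite{G08,CGST11,G15}; your argument is a correct reconstruction of the standard proof in those references --- the bias--variance decomposition of the MSE, the choice of $L$ to control the bias, the Lagrange-optimal allocation $N_\ell \propto \sqrt{V_\ell/C_\ell}$, the three regimes arising from the geometric sum $\sum_\ell 2^{(\gamma-\beta)\ell/2}$, and the hypothesis $\alpha \geq \min(\beta,\gamma)/2$ used exactly where you say, to absorb the $O(2^{\gamma L})$ ceiling correction. Nothing is missing.
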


\begin{remark}
As discussed in \cite[Section~2.1]{CGST11} and \cite[Section~2.1]{GG19}, under similar assumptions to those in Theorem~\ref{thm:mlmc}, the standard Monte Carlo estimator achieves a root-mean-square accuracy at a cost of $O(\varepsilon^{-2-\gamma/\alpha})$. Therefore, regardless of the values of $\beta>0$ and $\gamma$, the MLMC estimator always has an asymptotically lower complexity bound than the standard Monte Carlo estimator.
\end{remark}

\subsection{MLMC estimator}\label{subsec:estimator}
Here we construct an MLMC estimator of the difference $\EVPI-\EVSI$. Our starting point is to insert \eqref{eq:Bayes} into \eqref{eq:difference}, which results in
\begin{align*}
 \EVPI-\EVSI & = \EE_Y \EE_{\theta\,|\, Y}\left[ \max_{d\in D} f_d(\theta)\right]-\EE_{Y}\left[ \max_{d\in D}\EE_{\theta\,|\, Y}\left[ f_d(\theta)\right] \right] \\
 & = \EE_Y\left[ \frac{\EE_\theta\left[ \max_{d\in D} f_d(\theta)\rho(Y\,|\, \theta)\right]}{\EE_\theta[\rho(Y\,|\, \theta)]} - \max_{d\in D}\frac{\EE_{\theta}\left[ f_d(\theta)\rho(Y\,|\, \theta)\right]}{\EE_\theta[\rho(Y\,|\, \theta)]} \right] .
\end{align*}

This has converted the posterior expectation with respect to $\theta$ given $Y$ into the ratio of two prior expectations with respect to $\theta$, which avoids the need for MCMC sampling. This idea has been used not only in the current context \cite{M16} but also in other areas related to Bayesian computations, see \cite{SS13,SST17,DGLS18,GP18} among many others. On a technical level, this gives a decisive difference from estimating EVPPI as considered in \cite{GG19} for which we do not need such treatment. As discussed later in Subsection~\ref{subsec:aux}, efficient estimation of the nested expectation involving the ratio of two expectations is far from trivial. Although the algorithmic idea may perhaps seem a natural extension from \cite{GG19} the numerical analysis of it is certainly not.

Based on the above expression, within the framework of the MLMC method, let us consider a real-valued random variable
\[ P = \frac{\EE_\theta\left[ \max_{d\in D} f_d(\theta)\rho(Y\,|\, \theta)\right]}{\EE_\theta[\rho(Y\,|\, \theta)]} - \max_{d\in D}\frac{\EE_{\theta}\left[ f_d(\theta)\rho(Y\,|\, \theta)\right]}{\EE_\theta[\rho(Y\,|\, \theta)]} \]
with $Y$ being the underlying random variable. We see that $\EE_Y[P] = \EVPI-\EVSI$ but $P$ cannot be sampled exactly because of the inner expectations.

It is important, however, that all these inner expectations are taken with respect to the prior probability density of $\theta$, so that they can be approximated by the standard Monte Carlo method without requiring any MCMC sampling. This way, as a sequence of random variables $P_0,P_1,\ldots$ which approximate $P$ with increasing accuracy, we consider the standard Monte Carlo estimation of $P$:
\[ P_\ell := \frac{\bmaxfrhod^{M_\ell}}{\brhoY^{M_\ell}}- \max_{d\in D}\frac{\bfrhod^{M_\ell}}{\brhoY^{M_\ell}}\]
for an increasing sequence $M_0<M_1<\cdots$ with $M_\ell \to \infty$ as $\ell\to \infty$. Here, in the definition of $P_\ell$, we can use either the same $M_\ell$ samples of $\theta$ for all of the averages, or independent $M_\ell$ samples of $\theta$ for each average. In this article we focus on the former approach and consider a geometric progression for $M_0,M_1,\ldots$, i.e., let $M_\ell=M_02^{\ell}$ for some $M_0\in \ZZ_{>0}$. In fact, the subsequent argument holds for a general integer base $b\geq 2$, but we restrict ourselves to the case $b=2$ for simplicity of presentation.

Regarding a sequence of the correction variables $\Delta P_0,\Delta P_1,\ldots$, following the ideas of \cite{BHR15,GG19,GH19}, we consider an antithetic coupling of $P_{\ell-1}$ and $P_\ell$. That is, the set of $M_02^{\ell}$ samples of $\theta$ used to compute $P_\ell$ is split into two disjoint sets of $M_02^{\ell-1}$ samples to compute two independent realizations of $P_{\ell-1}$, denoted by $P_{\ell-1}^{(a)}$ and $P_{\ell-1}^{(b)}$. Then $\Delta P_\ell$ is defined by $\Delta P_0 = P_0$ and
\begin{align*}
\Delta P_\ell & := P_\ell-\frac{P_{\ell-1}^{(a)}+P_{\ell-1}^{(b)}}{2} \\
& = \frac{\bmaxfrhod}{\brhoY} - \max_{d\in D}\frac{\bfrhod}{\brhoY} \\
&\quad - \frac{1}{2}\left[ \frac{\bmaxfrhod^{(a)}}{\brhoY^{(a)}}+\frac{\bmaxfrhod^{(b)}}{\brhoY^{(b)}}\right] \\
& \quad+ \frac{1}{2}\left[ \max_{d\in D}\frac{\bfrhod^{(a)}}{\brhoY^{(a)}}+\max_{d\in D}\frac{\bfrhod^{(b)}}{\brhoY^{(b)}}\right] ,
\end{align*}
for $\ell\geq 1$, where we have omitted the superscripts $M_\ell$ from the first and second terms, and the averages with the superscripts $(a)$ and $(b)$ are taken by using the first and second $M_02^{\ell-1}$ samples of $\theta$ used to compute the first two terms, respectively. Assuming that each computation of $f_d(\theta)$, $\rho(Y\,|\, \theta)$ and $h(\theta)$ can be performed with unit cost, it is clear that $\gamma=1$ in Theorem~\ref{thm:mlmc} and $\EE[\Delta P_{\ell}] = \EE[P_\ell-P_{\ell-1}]$ for $\ell\geq 1$ because of the independence of the samples.

We mean by the word ``antithetic'' that the following properties hold:
\begin{align}
\brhoY & = \frac{\brhoY^{(a)}+\brhoY^{(b)}}{2}, \label{eq:anti1}\\
\bfrhod & =  \frac{\bfrhod^{(a)}+\bfrhod^{(b)}}{2}\quad \text{for all $d\in D$}, \label{eq:anti2} \\
\bmaxfrhod & = \frac{\bmaxfrhod^{(a)}+\bmaxfrhod^{(b)}}{2}. \label{eq:anti3}
\end{align}
This is the key advantage of the antithetic correction as compared to the standard correction $P_\ell-P_{\ell-1}^{(a)}$.

\subsection{Combination with importance sampling}\label{subsec:is}
In practical applications, it is often the case where the likelihood $\rho(Y\,|\, \theta)$ (as a function of $\theta$ for a fixed $Y$) is highly concentrated around some values of $\theta$. If one uses the i.i.d.\ samples of $\theta$ following from the prior density $\pi_0(\theta)$ for estimating $\EE_\theta[\rho(Y\,|\,\theta)]$ and $\EE_\theta[f_d(\theta)\rho(Y\,|\,\theta)]$, most of the samples can be distributed outside such concentrated regions. As a result, these quantities are estimated as almost zero, yielding a numerical instability of the estimates. Regarding theoretical analyses on such concentrated posterior measures on $\theta$, we refer to some recent papers \cite{SS16,SSWxx}.

As discussed in \cite[Section~3.4]{GHIxx} for MLMC applied to another nested expectation problem, one can combine importance sampling techniques with our MLMC estimator to address this issue. Let $q^Y(\theta)$ be an importance distribution of $\theta$ conditional on $Y$, which needs to satisfy $q^Y(\theta)>0$ for any $\theta$ with $\pi_0(\theta)>0$. Since we have
\[ \EE_\theta[f_d(\theta)\rho(Y\,|\,\theta)] = \EE_{\theta\sim q^Y}\left[f_d(\theta)\frac{\rho(Y\,|\,\theta)\pi_0(\theta)}{q^Y(\theta)}\right] \]
and
\[ \EE_\theta[\rho(Y\,|\,\theta)] = \EE_{\theta\sim q^Y}\left[\frac{\rho(Y\,|\,\theta)\pi_0(\theta)}{q^Y(\theta)}\right], \]
the random variables $P_\ell$ and $\Delta P_\ell$ can be replaced, respectively, by
\begin{align*}
P_\ell & = \frac{\overline{\max_{d\in D}f_d(\cdot)\rho(Y\,|\, \cdot)\pi_0(\cdot)/q^Y(\cdot)}^{M_\ell}}{\overline{\rho(Y\,|\, \cdot)\pi_0(\cdot)/q^Y(\cdot)}^{M_\ell}} - \max_{d\in D}\frac{\overline{f_d(\cdot)\rho(Y\,|\, \cdot)\pi_0(\cdot)/q^Y(\cdot)}^{M_\ell}}{\overline{\rho(Y\,|\, \cdot)\pi_0(\cdot)/q^Y(\cdot)}^{M_\ell}}, \\
\Delta P_\ell & = \frac{\overline{\max_{d\in D}f_d(\cdot)\rho(Y\,|\, \cdot)\pi_0(\cdot)/q^Y(\cdot)}}{\overline{\rho(Y\,|\, \cdot)\pi_0(\cdot)/q^Y(\cdot)}} - \max_{d\in D}\frac{\overline{f_d(\cdot)\rho(Y\,|\, \cdot)\pi_0(\cdot)/q^Y(\cdot)}}{\overline{\rho(Y\,|\, \cdot)\pi_0(\cdot)/q^Y(\cdot)}} \\
& \quad - \frac{1}{2}\left[ \frac{\overline{\max_{d\in D}f_d(\cdot)\rho(Y\,|\, \cdot)\pi_0(\cdot)/q^Y(\cdot)}^{(a)}}{\overline{\rho(Y\,|\, \cdot)\pi_0(\cdot)/q^Y(\cdot)}^{(a)}}+\frac{\overline{\max_{d\in D}f_d(\cdot)\rho(Y\,|\, \cdot)\pi_0(\cdot)/q^Y(\cdot)}^{(b)}}{\overline{\rho(Y\,|\, \cdot)\pi_0(\cdot)/q^Y(\cdot)}^{(b)}}\right] \\
& \quad + \frac{1}{2}\left[ \max_{d\in D}\frac{\overline{f_d(\cdot)\rho(Y\,|\, \cdot)\pi_0(\cdot)/q^Y(\cdot)}^{(a)}}{\overline{\rho(Y\,|\, \cdot)\pi_0(\cdot)/q^Y(\cdot)}^{(a)}}+\max_{d\in D}\frac{\overline{f_d(\cdot)\rho(Y\,|\, \cdot)\pi_0(\cdot)/q^Y(\cdot)}^{(b)}}{\overline{\rho(Y\,|\, \cdot)\pi_0(\cdot)/q^Y(\cdot)}^{(b)}}\right] ,
\end{align*}
where all of the averages are taken with respect to i.i.d.\ samples of $\theta\sim q^Y$ for a randomly chosen $Y$.

We suggest to find a good approximation of the posterior distribution $\pi^Y(\theta)$ for $q^Y(\theta)$. If one can do so, it follows from Bayes' theorem \eqref{eq:Bayes} that
\[ \frac{\rho(Y\,|\,\theta)\pi_0(\theta)}{q^Y(\theta)} \approx \frac{\rho(Y\,|\,\theta)\pi_0(\theta)}{\pi^Y(\theta)}= \EE_\theta[\rho(Y\,|\, \theta)]. \]
Since the right-most side does not depend on $\theta$, the integrand $\rho(Y\,|\,\theta)\pi_0(\theta)/q^Y(\theta)$ appearing in the denominator of each term for $P_\ell$ and $\Delta P_\ell$ becomes close to a constant function, so that its variance is extremely small. This way we can avoid the numerical instability of our original MLMC estimator.

\section{Theoretical results}\label{sec:theory}
In this section, we prove $\beta>\gamma=1$ and $\alpha\geq \beta/2$ under a set of assumptions on the decision and information models. This directly implies from Theorem~\ref{thm:mlmc} that our MLMC estimator of $\EVPI-\EVSI$ achieves a root-mean-square accuracy $\varepsilon$ at a cost of optimal $O(\varepsilon^{-2})$.

In what follows, for simplicity of notation, we write $\rho(Y)=\EE_\theta[\rho(Y\,|\, \theta)]$ and $F_d(Y)=\EE_{\theta}\left[ f_d(\theta)\rho(Y\,|\, \theta)\right]$. Note that we have
\[ \EE\left[\frac{\overline{\rho(Y\,|\, \cdot)}}{\rho(Y)}\,|\, Y\right] = \EE_\theta\left[\frac{\rho(Y\,|\, \theta)}{\rho(Y)}\,|\, Y\right] = 1. \]
Moreover we write
\[ \bgd^{(a)} = \frac{\bfrhod^{(a)}}{\brhoY^{(a)}} ,\ \bgd^{(b)} = \frac{\bfrhod^{(b)}}{\brhoY^{(b)}},\ \bgd = \frac{\bfrhod}{\brhoY}, \]
for $d\in D$, and
\[ \overline{g}_{\max}^{(a)} = \frac{\bmaxfrhod^{(a)}}{\brhoY^{(a)}},\ \overline{g}_{\max}^{(b)} = \frac{\bmaxfrhod^{(b)}}{\brhoY^{(b)}},\ \overline{g}_{\max} = \frac{\bmaxfrhod}{\brhoY}. \]
Then $\Delta P_\ell$ is given by $\Delta P_\ell=\Delta P_{\ell,1}+\Delta P_{\ell,2}$ with
\[ \Delta P_{\ell,1}=\overline{g}_{\max}-\frac{1}{2}\left(\overline{g}_{\max}^{(a)}+\overline{g}_{\max}^{(b)}\right) \quad \text{and}\quad \Delta P_{\ell,2}=\frac{1}{2}\left( \max_{d\in D}\bgd^{(a)}+ \max_{d\in D}\bgd^{(b)} \right)-\max_{d\in D}\bgd.\]

For a given $Y$, we define
\[ G_d(Y) := \EE_{\theta\,|\, Y}\left[ f_d(\theta)\right] = \frac{\EE_{\theta}\left[ f_d(\theta)\rho(Y\,|\, \theta)\right]}{\EE_\theta[\rho(Y\,|\, \theta)]} =  \frac{F_d(Y)}{\rho(Y)}  \]
for each $d\in D$, and
\[ d_{\opt}(Y) := \arg\max_{d\in D}F_d(Y)=\arg\max_{d\in D}G_d(Y). \]
The second equality is trivial since the denominator of $G_d$ does not affect the choice $\max_{d\in D}$. The domain for $Y$ is divided into a number of sub-domains in which the optimal decision $d_{\opt}$ is unique. We denote by $K$ the dividing decision manifold on which $d_{\opt}$ is not uniquely defined and we assume that $K$ is a lower-dimensional subspace of the domain for $Y$.

Let us give some assumptions on the decision and information models:
\begin{assumption}\label{assm:1}
There exists a constant $0<F_{max}<\infty$ such that $|f_d(\theta)| \leq F_{max}$ for any $d$ and $\theta$.
\end{assumption}

\begin{assumption}\label{assm:2}
There exist a constant $c_0>0$ such that for all $0<\epsilon < 1$
\[\PP_Y\left[ \min_{y\in K}\| Y-y\| \leq \epsilon \right]\leq c_0\epsilon. \]
\end{assumption}

\begin{assumption}\label{assm:3}
There exist constants $c_1,c_2>0$ such that if $Y\notin K$, the following holds:
\[ \max_{d\in D}G_d(Y) - \max_{\substack{d\in D\\ d\neq d_\opt(Y)}}G_d(Y) > \min\left( c_1,c_2\min_{y\in K}\| Y-y\| \right) .\]
\end{assumption}

\begin{assumption}\label{assm:4}
There exists a constant $p\geq 2$ such that
\[ \EE_Y \left[ \EE_\theta\left[ \left(\frac{\rho(Y\,|\, \theta)}{\rho(Y)}\right)^p \right] \right] < \infty .\]
\end{assumption}

\begin{remark}
Assumptions~\ref{assm:1}--\ref{assm:3} are similar to those considered in \cite{GG19}. In particular, Assumption~\ref{assm:2} is introduced to ensure a bound on the probability that $Y$ is close to the decision manifold $K$, while Assumption~\ref{assm:3} is to ensure a linear separation of different decisions as $Y$ moves away from $K$. Assumption~\ref{assm:1}, which is stronger than \cite[Assumption~1]{GG19}, together with Assumption~\ref{assm:4} enables us to bound the difference between $\bgd^{(a)}, \bgd^{(b)}, \bgd$ and $G_d$.
\end{remark}

Now we are ready to state our main result of this article.
\begin{theorem}\label{thm:main}
If Assumptions~\ref{assm:1}--\ref{assm:4} are satisfied, we have
\[ \VV[\Delta P_\ell] = O(2^{-(3p/(2p+2))\ell})\quad \text{and} \quad \EE[|\Delta P_\ell|] = O(2^{-(p/(p+1))\ell}), \]
where $p\geq 2$ is as given in Assumption~\ref{assm:4}.
\end{theorem}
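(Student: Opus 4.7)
The plan is to decompose $\Delta P_\ell=\Delta P_{\ell,1}+\Delta P_{\ell,2}$ as in the paper and to bound the two pieces by different techniques. For $\Delta P_{\ell,1}$, the antithetic identities \eqref{eq:anti1} and \eqref{eq:anti3} permit the elementary algebraic rewriting
\begin{equation*}
 \Delta P_{\ell,1}=\frac{(\brhoY^{(a)}-\brhoY^{(b)})(\overline{g}_{\max}^{(a)}-\overline{g}_{\max}^{(b)})}{4\,\brhoY},
\end{equation*}
which exhibits $\Delta P_{\ell,1}$ as a product of two half-sample differences, each of size $O(M_\ell^{-1/2})$ in $L^p$ by Assumption~\ref{assm:4}, divided by a denominator concentrated around $\rho(Y)$. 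I would split the sample space according to whether $\brhoY\geq\rho(Y)/2$ or not: on the former event the denominator is controlled and H\"older's inequality combined with Marcinkiewicz--Zygmund-type $L^p$ bounds on each factor (both derivable from Assumptions~\ref{assm:1} and~\ref{assm:4}) yields rapid decay, while on the complementary event the crude bound $|\Delta P_{\ell,1}|\leq 2F_{\max}$ from Assumption~\ref{assm:1} multiplied by the $p$-th-moment Markov tail estimate $\PP(\brhoY<\rho(Y)/2)=O(M_\ell^{-p/2})$ gives a contribution dominated by the claimed rate.

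The harder piece is $\Delta P_{\ell,2}$, because the outer $\max_{d\in D}$ is not smooth and the antithetic algebra does not apply directly. I would split on the event $\mathcal{E}$ that the three empirical maximizers $\arg\max_d\bgd^{(a)}$, $\arg\max_d\bgd^{(b)}$ and $\arg\max_d\bgd$ all coincide with $d_\opt(Y)$. On $\mathcal{E}$ the three outer maxima are attained at the common index $d_\opt(Y)$, so $\Delta P_{\ell,2}$ reduces to exactly the same antithetic product form as $\Delta P_{\ell,1}$ with $f_{d_\opt(Y)}$ in place of $\max_d f_d$, and the preceding argument applies verbatim. On $\mathcal{E}^c$ inserting $\pm\max_d G_d(Y)$ in each of the three maxima yields the triangle-inequality bound
\begin{equation*}
 |\Delta P_{\ell,2}|\leq \max_d|\bgd-G_d|+\tfrac{1}{2}\max_d|\bgd^{(a)}-G_d|+\tfrac{1}{2}\max_d|\bgd^{(b)}-G_d|=:E,
\end{equation*}
and the same ratio-estimator analysis as above, combined with Assumption~\ref{assm:4}, shows $\EE[E^p]=O(M_\ell^{-p/2})$.

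The decisive step is then to bound $\PP(\mathcal{E}^c)$. Conditioning on $r:=\min_{y\in K}\|Y-y\|$ and invoking Assumption~\ref{assm:3}, the true decision gap at $Y\notin K$ is at least $\Delta(r):=\min(c_1,c_2r)$, so $\mathcal{E}^c$ forces at least one of the estimation errors appearing in $E$ to exceed $\Delta(r)/2$. A $p$-th-moment Markov estimate then gives $\PP(\mathcal{E}^c\,|\,Y)\leq CM_\ell^{-p/2}\Delta(r)^{-p}$ up to a $Y$-dependent prefactor that is integrable by Assumption~\ref{assm:4}. Splitting the $Y$-integral at a threshold $r\leq\delta$ versus $r>\delta$, using Assumption~\ref{assm:2} to bound the former by $c_0\delta$ and the Markov estimate on the latter, and optimizing over $\delta\sim M_\ell^{-p/(2(p+1))}$ yields $\PP(\mathcal{E}^c)=O(M_\ell^{-p/(2(p+1))})$. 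Applying H\"older's inequality in the forms $\EE[E^2\mathbf{1}_{\mathcal{E}^c}]\leq\EE[E^p]^{2/p}\PP(\mathcal{E}^c)^{(p-2)/p}$ and $\EE[E\mathbf{1}_{\mathcal{E}^c}]\leq\EE[E^p]^{1/p}\PP(\mathcal{E}^c)^{(p-1)/p}$ (with Cauchy--Schwarz as the $p=2$ fallback) then produces precisely the exponents $3p/(2p+2)$ and $p/(p+1)$ once $M_\ell=M_02^\ell$ is substituted. The main obstacle is this simultaneous bookkeeping: the threshold $\delta$ and the H\"older exponents must be coordinated to extract the correct fractional rates from the single moment parameter $p$.
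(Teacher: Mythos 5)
Your proposal is correct and reaches both exponents, but it takes a partly different route in one place, so a comparison is worthwhile. For the hard term $\Delta P_{\ell,2}$ your architecture is the same as the paper's: split on whether the three empirical maximisers agree with $d_\opt(Y)$, bound the probability of disagreement by combining Assumption~\ref{assm:2} (for $Y$ near $K$) with Assumption~\ref{assm:3} and a $p$-th-moment Markov estimate (for $Y$ away from $K$), optimise the threshold at $\delta\sim 2^{-(p/(2p+2))\ell}$, and close with H\"older; your event $\mathcal{E}^c$ and the paper's $A\cup B$ coincide up to the implication, proved in the paper, that $A^c\cap B^c\subseteq\mathcal{E}$ for large $\ell$, and your exponent bookkeeping ($3p/(2p+2)$ for the second moment, $p/(p+1)$ for the first) is exactly the paper's. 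Where you genuinely diverge is in the antithetic ratio estimate (the second assertion of Lemma~\ref{lem:aux} and Remark~\ref{rem:gmax}): the paper expands $\frac{1}{2}(\bgd^{(a)}+\bgd^{(b)})-\bgd$ into six terms, each a product of a numerator and a denominator fluctuation, whereas you use the exact identity
\begin{equation*}
\overline{g}_{\max}-\frac{1}{2}\left(\overline{g}_{\max}^{(a)}+\overline{g}_{\max}^{(b)}\right)
=\frac{\left(\brhoY^{(a)}-\brhoY^{(b)}\right)\left(\overline{g}_{\max}^{(a)}-\overline{g}_{\max}^{(b)}\right)}{4\,\brhoY},
\end{equation*}
which is correct (it follows from $\frac{N_a+N_b}{D_a+D_b}-\frac{1}{2}(\frac{N_a}{D_a}+\frac{N_b}{D_b})=\frac{(D_a-D_b)(N_aD_b-N_bD_a)}{2D_aD_b(D_a+D_b)}$) and displays the correction as a single product of two half-sample differences, each $O(M_\ell^{-1/2})$ in $L^p$. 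This is cleaner and makes the rate $O(M_\ell^{-\min(p,2q)/2})$ for the $q$-th moment transparent: Cauchy--Schwarz when $2q\le p$, and the same truncation-by-$F_{max}$ device as the paper when $2q>p$, with the event $\brhoY<\rho(Y)/2$ handled by the crude bound $2F_{max}$ times a Markov tail exactly as in \eqref{eq:term1}. What the paper's longer expansion buys is modularity --- it applies term by term to each fixed $d$ and carries over verbatim to the importance-sampled variant --- while your identity is special to the two-way antithetic split, which is all that is needed here. I see no gap; the only points to make explicit in a full write-up are (i) that $|\overline{g}_{\max}^{(a)}-\overline{g}_{\max}^{(b)}|\le 2F_{max}$ unconditionally because numerator and denominator share the same samples, and (ii) that in the Markov step for $\PP(\mathcal{E}^c)$ the factor $\Delta(r)^{-p}$ is bounded by a constant multiple of $\delta^{-p}$ on $\{r>\delta\}$ before invoking the integrability furnished by Assumption~\ref{assm:4}.
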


\begin{remark}
When an importance sampling is used within the MLMC estimator, the same orders of the variance and the mean of $\Delta P_\ell$ can be shown by replacing Assumption~\ref{assm:4} with the existence of a constant $p\geq 2$ such that
\[ \EE_Y \left[ \EE_\theta\left[ \left(\frac{\rho(Y\,|\, \theta)\pi_0(\theta)}{\rho(Y)q^Y(\theta)}\right)^p \right] \right] < \infty. \]
Since the result can be proven in the same manner with the original MLMC estimator, we shall give a proof of Theorem~\ref{thm:main} only for the original estimator. 
\end{remark}

This theorem implies that the parameters $\alpha$ and $\beta$ in Theorem~\ref{thm:mlmc} are equal to $p/(p+1)$ and $3p/(2p+2)$, respectively. Since $\gamma=1<\beta$ if $p>2$, our MLMC estimator of $\EVPI-\EVSI$ is in the first regime. Therefore, the total computational complexity to achieve a root-mean-square accuracy $\varepsilon$ is of order $\varepsilon^{-2}$. If $p=2$, on the other hand, the equality $\beta=\gamma$ holds, which means that our MLMC estimator is in the second regime. In the next subsection, we give a proof of this theorem by using several lemmas which are shown later in Subsection~\ref{subsec:aux}.

\subsection{Proof of the main result}
We follow a similar argument to that used in \cite[Theorem~3]{GG19} in conjunction with novel results shown later. Recalling that $\Delta P_\ell=\Delta P_{\ell,1}+\Delta P_{\ell,2}$, we have
\[ \VV[\Delta P_\ell] \leq \EE[|\Delta P_\ell|^2] \leq 2\EE[|\Delta P_{\ell,1}|^2] +2\EE[|\Delta P_{\ell,2}|^2]. \]
We shall see later in Remark~\ref{rem:gmax} that the first term on the right-hand side is of $O(2^{-\min(p,4)\ell/2})$ which decays no slower than the desired order $2^{-(3p/(2p+2))\ell}$ for any $p\geq 2$. Thus it suffices to prove that the second term is of $O(2^{-(3p/(2p+2))\ell})$.

For $p$ as given in Assumption~\ref{assm:4}, let us define $\epsilon = 2^{-(p/(2p+2))\ell}$ and consider the events
\begin{align*}
A & \equiv \left\{ \min_{y\in K}\| Y-y\| \leq \epsilon \right\}, \\
B & \equiv \bigcup_{d\in D}\left\{ \max\left(  |\overline{g_d}^{(a)}-G_d|,  |\overline{g_d}^{(b)}-G_d|,  |\overline{g_d}-G_d|\right) \geq \frac{1}{2}c_2\epsilon \right\},
\end{align*}
where $c_2$ is as defined in Assumption~\ref{assm:3}.

For an event $E$, let $\bsone_E$ denote the indicator function which is 1 if $\omega\in E$, and zero otherwise, and let $E^c$ denote the complement of $E$. By using H\"{o}lder's inequality, we have
\begin{align*}
\EE[|\Delta P_{\ell,2}|^2] & = \EE[|\Delta P_{\ell,2}|^2\bsone_{A\cup B}] +\EE[|\Delta P_{\ell,2}|^2\bsone_{A^c\cap B^c}] \\
& \leq (\EE[|\Delta P_{\ell,2}|^p])^{2/p}(\EE[\bsone_{A\cup B}^{p/(p-2)}])^{(p-2)/p} +\EE[|\Delta P_{\ell,2}|^2\bsone_{A^c\cap B^c}] \\
& \leq (\EE[|\Delta P_{\ell,2}|^p)^{2/p}(\PP[A]+\PP[B])^{(p-2)/p} +\EE[|\Delta P_{\ell,2}|^2\bsone_{A^c\cap B^c}].
\end{align*}
In the following we show bounds on $\EE[|\Delta P_{\ell,2}|^p]$, $\PP[A]+\PP[B]$ and $\EE[|\Delta P_{\ell,2}|^2\bsone_{A^c\cap B^c}]$, respectively.\vspace{5pt}

\textbf{Bound on $\EE[|\Delta P_{\ell,2}|^p]$.} Noting that the inequality
\[ |\max_{d\in D}a_d-\max_{d\in D}b_d| \leq \max_{d\in D}|a_d-b_d|\leq \sum_{d\in D}|a_d-b_d| \]
holds for any two $|D|$-dimensional vectors with component $a_d,b_d$ and applying Jensen's inequality twice, we get
\begin{align*}
|\Delta P_{\ell,2}|^p & = \left| \frac{1}{2}\left( \max_{d\in D}\bgd^{(a)}+ \max_{d\in D}\bgd^{(b)} \right)-\max_{d\in D}\bgd \right|^p \\
& = \left| \frac{1}{2}\left( \max_{d\in D}\bgd^{(a)}-\max_{d\in D}G_d\right) +\frac{1}{2}\left( \max_{d\in D}\bgd^{(b)}-\max_{d\in D}G_d\right) - \left( \max_{d\in D}\bgd-\max_{d\in D}G_d\right) \right|^p \\
& \leq \left| \sum_{d\in D}\left(\frac{1}{2}\left| \bgd^{(a)}-G_d\right| +\frac{1}{2}\left| \bgd^{(b)}-G_d\right| + \left| \bgd-G_d\right|\right) \right|^p \\
& \leq |D|^{p-1}\sum_{d\in D}\left( \frac{1}{2}\left| \bgd^{(a)}-G_d\right| +\frac{1}{2}\left| \bgd^{(b)}-G_d\right| + \left| \bgd-G_d\right|\right)^p \\
& \leq (2|D|)^{p-1}\sum_{d\in D}\left( \frac{1}{2}\left| \bgd^{(a)}-G_d\right|^p +\frac{1}{2}\left| \bgd^{(b)}-G_d\right|^p + \left| \bgd-G_d\right|^p\right) .
\end{align*}

It follows from the first assertion of Lemma~\ref{lem:aux} that
\[ \EE\left[\left| \bgd^{(a)}-G_d\right|^p\right], \EE\left[\left| \bgd^{(b)}-G_d\right|^p\right], \EE\left[\left| \bgd-G_d\right|^p\right] = O(2^{-p\ell/2}), \]
for any $d\in D$. This leads to a bound on $\EE[|\Delta P_{\ell,2}|^p]$ of order $2^{-p\ell/2}$.\vspace{5pt}

\textbf{Bound on $\PP[A]+\PP[B]$.} It is straightforward from Assumption~\ref{assm:2} that $\PP[A]\leq c_0\epsilon$. Regarding a bound on $\PP[B]$, we have
\[ \PP[B] \leq \sum_{d\in D}\left( \PP[ |\bgd^{(a)}-G_d| \geq \frac{1}{2}c_2\epsilon]+\PP[ |\bgd^{(b)}-G_d| \geq \frac{1}{2}c_2\epsilon]+ \PP[ |\bgd-G_d| \geq \frac{1}{2}c_2\epsilon]\right). \]
Using the Markov inequality, we obtain
\begin{align*}
\PP\left[ |\bgd-G_d| \geq \frac{1}{2}c_2\epsilon\,|\, Y\right] & = \PP\left[ |\bgd-G_d|^p \geq \frac{1}{2^p}c_2^p\epsilon^p \,|\, Y\right] \\
& \leq \frac{2^p}{c_2^p\epsilon^p}\EE[ |\bgd-G_d|^p \,|\, Y] .
\end{align*}
Taking an outer expectation with respect to $Y$, the tower property provides
\begin{align*}
\PP\left[ |\bgd-G_d| \geq \frac{1}{2}c_2\epsilon\right] & = \EE_Y\left[\PP\left[ |\bgd-G_d| \geq \frac{1}{2}c_2\epsilon\,|\, Y\right]\right] \\
& \leq \frac{2^p}{c_2^p\epsilon^p}\EE[ |\bgd-G_d|^p ] .
\end{align*}
The first assertion of Lemma~\ref{lem:aux} states that the expectation on the right-most side above is of order $2^{-p\ell/2}$. Since similar bounds exist for $\PP[ |\bgd^{(a)}-G_d| \geq \frac{1}{2}c_2\epsilon]$ and $\PP[ |\bgd^{(b)}-G_d| \geq \frac{1}{2}c_2\epsilon]$, this proves a bound on $\PP[A]+\PP[B]$ of order
\[ \max(\epsilon, \epsilon^{-p}2^{-p\ell/2}) = 2^{-(p/(2p+2))\ell}.\]

\textbf{Bound on $\EE[|\Delta P_{\ell,2}|^2\bsone_{A^c\cap B^c}]$.} Finally let us consider the case where the event $A^c\cap B^c$ happens. Namely we have $\min_{y\in K}\| Y-y\| > \epsilon$ and for any $d\in D$
\[  |\bgd^{(a)}-G_d|,  |\bgd^{(b)}-G_d|,  |\bgd-G_d|< \frac{1}{2}c_2\epsilon. \]
For a particular value of $Y$, for any decision $d\neq d_{\opt}(Y)$, Assumption~\ref{assm:3} ensures
\begin{align*}
\overline{g_{d_\opt}}-\bgd & = (G_{d_\opt}-G_d)+(\overline{g_{d_\opt}}-G_{d_\opt})-(\bgd-G_d) \\
& > \min(c_1,c_2\epsilon) - \frac{1}{2}c_2\epsilon - \frac{1}{2}c_2\epsilon = \min(c_1-c_2\epsilon, 0).
\end{align*}
Therefore, for a sufficiently large $\ell$, we have $c_1-c_2\epsilon>0$ and so $\overline{g_{d_\opt}}-\overline{g_d}>0$. This means that $\arg\max_{d\in D}\overline{g_d}=d_\opt$, that is, we shall always choose an optimal decision, and so for $\overline{g_d}^{(a)}$ and $\overline{g_d}^{(b)}$. It follows that
\[ \Delta P_{\ell,2}= \frac{1}{2}\left( \max_{d\in D}\overline{g_d}^{(a)}+\max_{d\in D}\overline{g_d}^{(b)}\right) - \max_{d\in D}\overline{g_d} =\frac{1}{2}\left( \overline{g_{d_\opt}}^{(a)}+\overline{g_{d_\opt}}^{(b)}\right) - \overline{g_{d_\opt}} . \]
We see from the second assertion of Lemma~\ref{lem:aux} with $q=2$ that the expectation of the square of the right-most side above is of order $2^{-\min(p,4)\ell/2}$, which proves a bound on $\EE[|\Delta P_{\ell,2}|^2\bsone_{A^c\cap B^c}]$.\vspace{5pt}

Altogether, $\VV[\Delta P_\ell]$ is upper bounded as
\begin{align*}
 \VV[\Delta P_\ell] & \leq 2\EE[|\Delta P_{\ell,1}|^2]+2\EE[|\Delta P_{\ell,2}|^2] \\
 & = O(2^{-\min(p,4)\ell/2})+O(2^{-\ell}) \times O(2^{-(p-2)\ell/(2p+2)})+O(2^{-\min(p,4)\ell/2}) \\
 & = O(2^{-(3p/(2p+2))\ell}).
\end{align*}
Since the bound on $\EE[|\Delta P_\ell|]$ can be proven in a similar way, we omit the proof.

\subsection{Auxiliary results}\label{subsec:aux}

Before stating necessary auxiliary results, let us first recall the following fact. The proof can be found in \cite[Lemma~1]{GG19}.
\begin{lemma}\label{lem:deviation}
Let $X$ be a real-valued random variable with mean zero and $\overline{X}_N$ be an average of $N$ i.i.d.\ samples of $X$. If $\EE[|X|^p]<\infty$ for $p\geq 2$, there exists a constant $C_p>0$ depending only on $p$ such that
\[ \EE[|\overline{X}_N|^p] \leq C_p\frac{\EE[|X|^p]}{N^{p/2}}\quad \text{and}\quad \PP[|\overline{X}_N|>c]\leq C_p\frac{\EE[|X|^p]}{c^pN^{p/2}}. \]
\end{lemma}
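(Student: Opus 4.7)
The plan is to obtain the first moment bound from a classical moment inequality for i.i.d.\ sums and to deduce the tail bound from it via Markov. Setting $S_N = \sum_{i=1}^{N} X_i$ so that $\overline{X}_N = S_N/N$, it suffices to show that $\EE[|S_N|^p] \leq C_p N^{p/2} \EE[|X|^p]$, from which the first claim follows by dividing by $N^p$.

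The tool I would reach for is the Marcinkiewicz--Zygmund inequality, which for $p \geq 2$ and i.i.d.\ mean-zero summands supplies a constant $B_p$ with
\[ \EE[|S_N|^p] \leq B_p\, \EE\!\left[\Bigl(\sum_{i=1}^{N} X_i^2\Bigr)^{p/2}\right]. \]
Since $p/2 \geq 1$, Jensen's inequality applied to the convex map $x \mapsto x^{p/2}$ yields $\bigl(\tfrac{1}{N}\sum X_i^2\bigr)^{p/2} \leq \tfrac{1}{N}\sum |X_i|^p$, so that $\EE[(\sum X_i^2)^{p/2}] \leq N^{p/2}\, \EE[|X|^p]$ by the identical distribution of the $X_i$. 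Combining and dividing by $N^p$ gives the first claim with $C_p = B_p$. The tail bound is then a one-line consequence of Markov's inequality applied to $|\overline{X}_N|^p$:
\[ \PP[|\overline{X}_N| > c] = \PP[|\overline{X}_N|^p > c^p] \leq c^{-p}\, \EE[|\overline{X}_N|^p] \leq C_p\, \EE[|X|^p]/(c^p N^{p/2}). \]

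The only non-elementary ingredient is Marcinkiewicz--Zygmund itself, which is most transparently derived from the Burkholder--Davis--Gundy inequality applied to the partial-sum martingale $\{S_n\}_{n \leq N}$. An equally serviceable alternative is Rosenthal's inequality $\EE[|S_N|^p] \leq C_p\bigl(\sum \EE[|X_i|^p] + (\sum \EE[X_i^2])^{p/2}\bigr)$, which, combined with $\EE[X^2]^{p/2} \leq \EE[|X|^p]$ via Jensen and the i.i.d.\ hypothesis, also produces the $O(N^{p/2})$ bound directly. Hence there is no real obstacle in the proof; the main ``work'' is simply identifying and citing the appropriate classical moment inequality, and the dependence $N^{-p/2}$ on the sample size is exactly what a centered sum of $N$ terms with finite $p$-th moment should deliver.
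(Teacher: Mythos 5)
Your proof is correct, and it follows essentially the same route as the reference the paper cites for this lemma (\cite[Lemma~1]{GG19}): the Marcinkiewicz--Zygmund (Burkholder) inequality combined with Jensen's inequality for the moment bound, and Markov's inequality for the tail bound. Nothing further is needed.
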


\begin{lemma}\label{lem:aux}
If Assumptions~\ref{assm:1} and \ref{assm:4} are satisfied, then for any $d\in D$ we have
\[ \EE\left[ |\bgd - G_d|^q \right] = O(2^{-q\ell/2}) \quad \text{for any $1\leq q\leq p$} \]
and
\[ \EE\left[ \left| \frac{1}{2} \left(\bgd^{(a)} + \bgd^{(b)}\right) - \bgd \right|^q \right] = O(2^{-\min(p,2q)\ell/2}) \quad \text{for any $1\leq q\leq p$}, \]
where $p$ is as defined in Assumption~\ref{assm:4}.
\end{lemma}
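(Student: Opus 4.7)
The plan is to reduce each claim to a controlled centered Monte Carlo sum via Lemma~\ref{lem:deviation}, while handling the fact that the denominator $\brhoY$ is not uniformly bounded below by splitting the sample space into a good event $\{\brhoY\geq \rho(Y)/2\}$ and its complement. For the first claim, I would use the algebraic identity
\[ \bgd - G_d = \frac{\overline{(f_d(\cdot) - G_d(Y))\rho(Y\,|\,\cdot)}}{\brhoY}, \]
valid since $\overline{f_d\rho}-G_d\overline{\rho}=\overline{(f_d-G_d)\rho}$ and $G_d$ is a function of $Y$ alone. On the good event, Lemma~\ref{lem:deviation} applied conditionally on $Y$ (with centered integrand bounded by $2F_{max}\rho(Y\,|\,\theta)$ via Assumption~\ref{assm:1}) yields a conditional $q$-th moment of order $M_\ell^{-q/2}(2F_{max})^q\EE_\theta[\rho(Y\,|\,\theta)^q]/\rho(Y)^q$; integrating over $Y$ via Assumption~\ref{assm:4} and Lyapunov's inequality gives the claimed $O(2^{-q\ell/2})$. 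On the bad event, the trivial bound $|\bgd-G_d|\leq 2F_{max}$ combined with $\PP[\text{bad}\mid Y]\leq 2^p\rho(Y)^{-p}\EE[|\brhoY-\rho(Y)|^p\mid Y]$ (Markov plus Lemma~\ref{lem:deviation}) contributes only $O(2^{-p\ell/2})$, which is absorbed since $q\leq p$.

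For the second claim, the crucial step is the antithetic algebraic identity, obtained by a direct computation from \eqref{eq:anti1} and \eqref{eq:anti2}:
\[ \frac{1}{2}\bigl(\bgd^{(a)}+\bgd^{(b)}\bigr) - \bgd = \frac{(\bgd^{(a)} - \bgd^{(b)})(\brhoY^{(b)}-\brhoY^{(a)})}{4\brhoY}. \]
This factorizes the correction into a product of two independent mean-zero factors, each of $L^s$-order $2^{-\ell/2}$ by Lemma~\ref{lem:deviation} (for the second factor) and by the first part of the lemma applied with $M_\ell/2$ samples (for the first, after a triangle inequality around $G_d$). On the good event I would apply conditional Cauchy--Schwarz to decouple the two factors. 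When $2q\leq p$, Cauchy--Schwarz at exponent $2q$ is admissible and yields the optimal rate $O(2^{-q\ell})$. When $2q>p$, I would first invoke the uniform bound $\bigl|\tfrac{1}{2}(\bgd^{(a)}+\bgd^{(b)})-\bgd\bigr|\leq F_{max}$ (readily verified from the identity above together with Assumption~\ref{assm:1}) to reduce to $|\cdot|^q\leq F_{max}^{q-p/2}|\cdot|^{p/2}$, then apply Cauchy--Schwarz at the maximal admissible exponent $p$, yielding $O(2^{-p\ell/2})$. The two regimes combine into the advertised $O(2^{-\min(p,2q)\ell/2})$; the bad event is handled as in Part~1.

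The main obstacle I anticipate is calibrating the Cauchy--Schwarz exponent in Part~2: the $\min(p,2q)$ ceiling arises precisely from an interpolation between the uniform bound (Assumption~\ref{assm:1}) and the $p$-th moment bound (Assumption~\ref{assm:4}). A naive triangle inequality applied termwise on $\bgd^{(a)}$, $\bgd^{(b)}$, $\bgd$ would only reproduce the $O(2^{-q\ell/2})$ rate of Part~1 and would destroy the antithetic variance reduction that is crucial for the $\beta>\gamma=1$ verification underlying Theorem~\ref{thm:main}.
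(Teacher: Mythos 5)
Your proposal is correct, and while Part~1 follows essentially the paper's own route (split on an event controlling $\brhoY/\rho(Y)$, apply Lemma~\ref{lem:deviation} conditionally on $Y$, integrate via Assumption~\ref{assm:4}, and get $q<p$ by H\"{o}lder/Lyapunov), your Part~2 takes a genuinely different and more compact algebraic path. The paper expands $\frac{1}{2}(\bgd^{(a)}+\bgd^{(b)})-\bgd$ into six terms, each a product of a centered numerator error with a centered denominator error (or a quadratic expression in $\brhoY^{(\cdot)}/\rho(Y)-1$), and then bounds each product by H\"{o}lder. Your single identity
\[ \frac{1}{2}\bigl(\bgd^{(a)}+\bgd^{(b)}\bigr)-\bgd=\frac{\bigl(\bgd^{(a)}-\bgd^{(b)}\bigr)\bigl(\brhoY^{(b)}-\brhoY^{(a)}\bigr)}{4\,\brhoY} \]
(which I have checked: the numerator of the difference over the common denominator $2\brhoY^{(a)}\brhoY^{(b)}(\brhoY^{(a)}+\brhoY^{(b)})$ collapses to $(\brhoY^{(b)}-\brhoY^{(a)})(\bfrhod^{(a)}\brhoY^{(b)}-\bfrhod^{(b)}\brhoY^{(a)})$) packages the whole antithetic cancellation into one product of two $O(2^{-\ell/2})$ factors, so a single Cauchy--Schwarz at exponent $\min(p,2q)$, together with your interpolation against the uniform bound when $2q>p$, delivers exactly the advertised $O(2^{-\min(p,2q)\ell/2})$. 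The analytic machinery (good/bad event on $\brhoY/\rho(Y)$, Lemma~\ref{lem:deviation}, capping at the $p$-th moment) is the same as the paper's, but your identity makes the source of the antithetic gain more transparent. Two small inaccuracies that do not affect the argument: the two factors in your product are \emph{not} independent, since $\bgd^{(a)}$ and $\brhoY^{(a)}$ are built from the same samples (this is harmless because you decouple them with Cauchy--Schwarz, not independence), and the uniform bound on the correction is $2F_{max}$ rather than $F_{max}$, a constant that is immaterial. You should also state explicitly, as the paper does, that the good event for Part~2 must control both $\brhoY^{(a)}$ and $\brhoY^{(b)}$ (which then controls $\brhoY$ automatically), since the first factor $\bgd^{(a)}-\bgd^{(b)}$ carries the half-sample denominators.
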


\begin{proof}
Note that because of the fact that the same i.i.d.\ samples of $\theta$ are used both in the denominator and numerator of $\bgd^{(a)},\bgd^{(b)},\bgd$, respectively, it follows from Assumption~\ref{assm:1} that $|\bgd^{(a)}|,|\bgd^{(b)}|,|\bgd| \leq F_{max}$. Also we have
\[ |F_d(Y)| \leq F_{max}\EE_{\theta}\left[\rho(Y\,|\, \theta)\right] = F_{max}\rho(Y). \]

For the first assertion, we give a proof only for $q\!=\!p$, since the result for $q<p$ follows immediately
from H\"{o}lder's inequality.

We define an extreme event $E$ as
\[ E: \left|\frac{\brhoY}{\rho(Y)} - 1\right| > \frac{1}{2} \]
and then
\[ \EE[|\bgd - G_d|^p] = \EE[|\bgd - G_d|^p\bsone_E ] + \EE[|\bgd - G_d|^p\bsone_{E^c} ]. \]

For the first term, due to H\"{o}lder's inequality, tower property of expectation and Lemma~\ref{lem:deviation}, we have
\begin{align}
\EE[|\bgd - G_d|^p\bsone_E ] & \leq (2F_{max})^p \PP[E] \notag \\
&\leq 2^{2p-p\ell/2} M_0^{-p/2}C_p F_{max}^p \EE_Y \left[ \EE_\theta\left[ \left|\frac{\rho(Y\,|\, \theta)}{\rho(Y)}-1\right|^p \right] \right].
\label{eq:term1}
\end{align}

For the second term, when $\left| \brhoY / \rho(Y) - 1 \right| \leq 1/2$ we have
\begin{align*}
|\bgd - G_d|^p & \leq 2^{p-1} \left| \frac{\bfrhod}{\brhoY} - \frac{F_d(Y)}{\brhoY} \right|^p + 2^{p-1} \left| \frac{F_d(Y)}{\brhoY} - \frac{F_d(Y)}{\rho(Y)} \right|^p \\
& \leq \frac{2^{2p-1}}{\rho(Y)^p} \left|\bfrhod - F_d(Y)\right|^p + 2^{2p-1} F_{max}^p \left|\frac{\brhoY}{\rho(Y)} - 1\right|^p
\end{align*}
and therefore, again by Lemma 1,
\begin{align}
\EE[ |\bgd - G_d|^p\bsone_{E^c} ] & \leq 2^{2p-1-p\ell/2} M_0^{-p/2}C_p \EE_Y\left[ \EE_\theta\left[ \left| \frac{f_d(\theta)\rho(Y\,|\, \theta)-F_d(Y)}{\rho(Y)}\right|^p\right] \right] \notag \\
 &\quad +  2^{2p-1-p\ell/2}M_0^{-p/2}C_p F_{max}^p \EE_Y \left[ \EE_\theta\left[ \left|\frac{\rho(Y\,|\, \theta)}{\rho(Y)}-1\right|^p \right] \right].
\label{eq:term2}
\end{align}

Note that
\[ \left|\frac{\rho(Y\,|\, \theta)}{\rho(Y)}-1\right|^p \leq \max\left( \frac{\rho(Y\,|\, \theta)}{\rho(Y)}, 1\right)^p \leq \left(\frac{\rho(Y\,|\, \theta)}{\rho(Y)}\right)^p + 1, \]
which gives
\begin{align}
 \EE_Y \left[ \EE_\theta\left[\left|\frac{\rho(Y\,|\, \theta)}{\rho(Y)}-1\right|^p\right]\right] \leq \EE_Y \left[ \EE_\theta\left[ \left(\frac{\rho(Y\,|\, \theta)}{\rho(Y)}\right)^p\right]\right] + 1.
\label{eq:mom1}
\end{align}
Similarly,
\begin{align*}
|f_d(\theta)\rho(Y\,|\, \theta)-F_d(Y)|^p & \leq  2^{p-1} \left( |f_d(\theta)\rho(Y\,|\, \theta)|^p + |F_d(Y)|^p \right) \\
& \leq 2^{p-1}F_{max}^p (\rho(Y\,|\, \theta)^p +  \rho(Y)^p),
\end{align*}
which gives
\begin{align}
\EE_Y\left[ \EE_\theta\left[ \left| \frac{f_d(\theta)\rho(Y\,|\, \theta)-F_d(Y)}{\rho(Y)}\right|^p\right] \right] \leq 2^{p-1}F_{max}^p \left( \EE_Y \left[ \EE_\theta\left[ \left(\frac{\rho(Y\,|\, \theta)}{\rho(Y)}\right)^p\right]\right] + 1\right).
\label{eq:mom2}
\end{align}
Combining the bounds in \eqref{eq:term1} and \eqref{eq:term2}, and then using \eqref{eq:mom1} and \eqref{eq:mom2}, we get
\[ \EE[\, |\bgd - G_d|^p ] \leq (3\cdot 2^{2p-1}\!+\!2^{3p-2}) 2^{-p\ell/2}M_0^{-p/2}C_p F_{max}^p \left( \EE_Y \left[ \EE_\theta\left[ \left(\frac{\rho(Y\,|\, \theta)}{\rho(Y)}\right)^p\right]\right] + 1\right), \]
which completes the proof of the first assertion.

For the second assertion we define the extreme event $E$ as
\[ E: \max\left( \left|\frac{\brhoY^{(a)}}{\rho(Y)} -1\right|, \left|\frac{\brhoY^{(b)}}{\rho(Y)} -1\right| \right) > \frac{1}{2} \]
and then
\begin{align*}
  \EE\left[\left| \frac{1}{2} \left(\bgd^{(a)} + \bgd^{(b)}\right) - \bgd \right|^q \right] & = \EE\left[\left| \frac{1}{2} \left(\bgd^{(a)} + \bgd^{(b)}\right) - \bgd \right|^q \bsone_E \right] \\
 &\quad + \EE\left[\left| \frac{1}{2}\left(\bgd^{(a)} + \bgd^{(b)}\right) - \bgd \right|^q \bsone_{E^c} \right]
\end{align*}
For the first term, following similar reasoning to before, we have
\begin{align}
& \EE\left[\left| \frac{1}{2} \left(\bgd^{(a)} + \bgd^{(b)}\right) - \bgd \right|^q \bsone_E \right] \notag \\
& \leq (2F_{max})^q \left( \PP\left[ \left|\frac{\brhoY^{(a)}}{\rho(Y)} -1\right| > \fracs{1}{2}\right] + \PP\left[ \left|\frac{\brhoY^{(b)}}{\rho(Y)} -1\right| > \fracs{1}{2}\right] \right)\notag \\
& \leq 2^{p+q+1} C_p 2^{-p(\ell-1)/2}M_0^{-p/2} F_{max}^q \left( \EE_Y \left[ \EE_\theta\left[ \left(\frac{\rho(Y\,|\, \theta)}{\rho(Y)}\right)^p\right]\right] + 1\right).
\label{eq:term3}
\end{align}
For the second term, we use the antithetic properties \eqref{eq:anti1} and \eqref{eq:anti2} to give
\begin{align*}
\frac{1}{2} \left(\bgd^{(a)} + \bgd^{(b)}\right) - \bgd & = \frac{1}{2} \left( \bfrhod^{(a)} - F_d(Y)\right)
    \left(\frac{1}{\brhoY^{(a)}}-\frac{1}{\rho(Y)}\right)\\
&\quad + \frac{1}{2} \left( \bfrhod^{(b)} - F_d(Y)\right)\left(\frac{1}{\brhoY^{(b)}}-\frac{1}{\rho(Y)}\right)\\
&\quad - \left( \bfrhod - F_d(Y)\right)\left(\frac{1}{\brhoY}-\frac{1}{\rho(Y)}\right)\\
&\quad + \frac{1}{2} \frac{F_d(Y)}{\rho(Y)} \left(\frac{\rho(Y)}{\brhoY^{(a)}}- 2 + \frac{\brhoY^{(a)}}{\rho(Y)} \right)\\
&\quad + \frac{1}{2} \frac{F_d(Y)}{\rho(Y)} \left(\frac{\rho(Y)}{\brhoY^{(b)}}- 2 + \frac{\brhoY^{(b)}}{\rho(Y)} \right)\\
&\quad - \frac{F_d(Y)}{\rho(Y)} \left(\frac{\rho(Y)}{\brhoY}- 2 + \frac{\brhoY}{\rho(Y)} \right).
\end{align*}
If $|\brhoY^{(a)} / \rho(Y) -1| < 1/2$ and $|\brhoY^{(b)} / \rho(Y) -1| < 1/2$, it follows that $|\brhoY / \rho(Y) -1| < 1/2$, and therefore by applying Jensen's inequality we obtain
\begin{align*}
\left|\frac{1}{2} \left(\bgd^{(a)} + \bgd^{(b)}\right) - \bgd\right|^q & \leq 2^{2q-3}\left| \bfrhod^{(a)} - F_d(Y)\right|^q \left|\frac{1}{\brhoY^{(a)}}-\frac{1}{\rho(Y)}\right|^q \\
&\quad + 2^{2q-3}\left| \bfrhod^{(b)} - F_d(Y)\right|^q\left|\frac{1}{\brhoY^{(b)}}-\frac{1}{\rho(Y)}\right|^q\\
&\quad + 2^{2q-2}\left| \bfrhod - F_d(Y)\right|^q\left|\frac{1}{\brhoY}-\frac{1}{\rho(Y)}\right|^q \\
&\quad + 2^{2q-3}\frac{|F_d(Y)|^q}{\rho(Y)^q} \left|\frac{\rho(Y)}{\brhoY^{(a)}}- 2 + \frac{\brhoY^{(a)}}{\rho(Y)} \right|^q\\
&\quad + 2^{2q-3}\frac{|F_d(Y)|^q}{\rho(Y)^q} \left|\frac{\rho(Y)}{\brhoY^{(b)}}- 2 + \frac{\brhoY^{(b)}}{\rho(Y)} \right|^q\\
&\quad + 2^{2q-2}\frac{|F_d(Y)|^q}{\rho(Y)^q} \left|\frac{\rho(Y)}{\brhoY}- 2 + \frac{\brhoY}{\rho(Y)} \right|^q \\
& \leq 2^{3q-3}\left| \bfrhod^{(a)} - F_d(Y)\right|^q \left|\frac{\brhoY^{(a)}}{\rho(Y)}-1\right|^q\frac{1}{\rho(Y)^q} \\
&\quad + 2^{3q-3}\left| \bfrhod^{(b)} - F_d(Y)\right|^q\left|\frac{\brhoY^{(b)}}{\rho(Y)}-1\right|^q\frac{1}{\rho(Y)^q}\\
&\quad + 2^{3q-2}\left| \bfrhod - F_d(Y)\right|^q\left|\frac{\brhoY}{\rho(Y)}-1\right|^q\frac{1}{\rho(Y)^q} \\
&\quad + 2^{3q-3}|F_d(Y)|^q \left|\frac{\brhoY^{(a)}}{\rho(Y)}-1\right|^{2q}\frac{1}{\rho(Y)^q}\\
&\quad + 2^{3q-3}|F_d(Y)|^q \left|\frac{\brhoY^{(b)}}{\rho(Y)}-1\right|^{2q}\frac{1}{\rho(Y)^q}\\
&\quad + 2^{3q-2}|F_d(Y)|^q \left|\frac{\brhoY}{\rho(Y)}-1\right|^{2q}\frac{1}{\rho(Y)^q}.
\end{align*}

Looking at the third term, if $p \geq 2q$ then H\"{o}lder's inequality together with Lemma~\ref{lem:deviation}, and \eqref{eq:mom1} and \eqref{eq:mom2} gives
\begin{align*}
& \EE \left[ \left| \bfrhod - F_d(Y)\right|^q\left|\frac{\brhoY}{\rho(Y)}-1\right|^q\frac{1}{\rho(Y)^q} \right] \\
&\leq \left( \EE \left[ \left| \bfrhod - F_d(Y)\right|^{2q} /\rho(Y)^{2q} \right] \EE \left[ \left|\frac{\brhoY}{\rho(Y)}-1\right|^{2q} \right]\right)^{1/2} \\
&\leq 2^{q-1/2} C_{2q} 2^{-q\ell}M_0^{-q} F_{max}^q \left( \EE_Y \left[ \EE_\theta\left[ \left(\frac{\rho(Y\,|\, \theta)}{\rho(Y)}\right)^{2q}\right]\right] + 1\right).
\end{align*}
If $2\leq p<2q$ then we need to modify the argument slightly, using H\"{o}lder's inequality when $q<p$ and a simple identity when $q=p$, to obtain
\begin{align*}
& \EE \left[ \left| \bfrhod - F_d(Y)\right|^q\left|\frac{\brhoY}{\rho(Y)}-1\right|^q\frac{1}{\rho(Y)^q} \bsone_{E^c}\right] \\
& \leq 2^{p-2q}\EE \left[ \left| \bfrhod - F_d(Y)\right|^q\left|\frac{\brhoY}{\rho(Y)}-1\right|^{p-q}\frac{1}{\rho(Y)^q} \right] \\
& \leq 2^{p-2q} \left( \EE \left[ \left| \bfrhod - F_d(Y)\right|^p /\rho(Y)^p \right]\right)^{q/p}\left( \EE \left[ \left|\frac{\brhoY}{\rho(Y)}-1\right|^p \right]\right)^{1-q/p} \\
&\leq 2^{p-q-q/p} C_p 2^{-p\ell/2}M_0^{-p/2} F_{max}^q \left( \EE_Y \left[ \EE_\theta\left[ \left(\frac{\rho(Y\,|\, \theta)}{\rho(Y)}\right)^p\right]\right] + 1\right).
\end{align*}

Looking at the sixth term, by Lemma 1 and \eqref{eq:mom1} we obtain
\begin{align*}
& \EE \left[ |F_d(Y)|^q \left|\frac{\brhoY}{\rho(Y)}-1\right|^{2q}\frac{1}{\rho(Y)^q}\right]\\
&\leq  C_{2q} 2^{-q\ell}M_0^{-q} F_{max}^q \left( \EE_Y \left[ \EE_\theta\left[ \left(\frac{\rho(Y\,|\, \theta)}{\rho(Y)}\right)^{2q}\right]\right] + 1\right),
\end{align*}
when $p\geq 2q$, while for $2\leq p< 2q$ we get
\begin{align*}
& \EE \left[ |F_d(Y)|^q \left|\frac{\brhoY}{\rho(Y)}-1\right|^{2q}\frac{1}{\rho(Y)^q} \bsone_{E^c}\right] \\
&\leq  2^{p-2q}C_p 2^{-p\ell/2}M_0^{-p/2} F_{max}^q \left( \EE_Y \left[ \EE_\theta\left[ \left(\frac{\rho(Y\,|\, \theta)}{\rho(Y)}\right)^p\right]\right] + 1\right).
\end{align*}

The other terms can be treated similarly and therefore, adding in the bound from \eqref{eq:term3}, we determine that there exists a constant $C$, dependent only on $p$ and $q$, such that
\begin{align*}
& \EE\left[\left|\frac{1}{2} \left(\bgd^{(a)} + \bgd^{(b)}\right) - \bgd\right|^q \right] \\
&\leq C 2^{-\min(p,2q)\ell/2}M_0^{-\min(p,2q)/2} F_{max}^{q} \left( \EE_Y \left[ \EE_\theta\left[ \left(\frac{\rho(Y\,|\, \theta)}{\rho(Y)}\right)^{\min(p,2q)}\right]\right] + 1\right),
\end{align*}
completing the proof of the second assertion.
\end{proof}

\begin{remark}\label{rem:gmax}
It follows from Assumption~\ref{assm:1} that
\[\left|\max_{d\in D}f_d(\theta)\right| \leq F_{max}\]
for any $\theta$. Given that the antithetic property \eqref{eq:anti3} holds, the same proof strategy leads to
\[ \EE[|\Delta P_{\ell,1}|^q]=\EE\left[ \left| \overline{g}_{\max}-\frac{1}{2}\left(\overline{g}_{\max}^{(a)}+\overline{g}_{\max}^{(b)}\right) \right|^q \right] = O(2^{-\min(p,2q)\ell/2}), \]
where $p$ is as defined in Assumption~\ref{assm:4}.
\end{remark}

\begin{remark}\label{rem:non-antithetic}
Without the antithetic technique, the best that can be deduced from Lemma~\ref{lem:aux} is that
\begin{align*}
 \EE\left[ |\bgd^{(a)}-\bgd|^q \right] & = \EE\left[ |\bgd^{(a)}-G_d-\bgd+G_d|^q \right] \\
 & \leq 2^{q-1}\EE\left[ |\bgd^{(a)}-G_d|^q \right]+2^{q-1}\EE\left[ |\bgd-G_d|^q \right] = O(2^{-q\ell/2})
\end{align*}
for $1\leq q\leq p$. This leads to the variance of the non-antithetic correction $\Delta P_\ell = P_\ell-P_{\ell-1}^{(a)}$ being $O(2^{-\ell})$. Since we now have $\beta=1$ in Theorem~\ref{thm:mlmc}, the non-antithetic MLMC estimator is in the second regime regardless of the value of $p\geq 2$. The resulting total computational complexity is thus $O(\varepsilon^{-2}(\log \varepsilon^{-1})^2)$, which is not optimal.
\end{remark}

\section{Numerical experiments}\label{sec:numer}

\subsection{Problem setting}
To demonstrate the efficiency of our MLMC estimator, we conduct some numerical experiments using a modified version of the model originally introduced in \cite{ALC04} and explored further in \cite{SOBB15}. The original version of this artificial cost-effectiveness model compares a new treatment to some standard of care on the prevention of a critical event, denoted by $E$. We extend this to three decision options $D=\{1,2,3\}$. We also modify the model to include correlations between odds ratios (OR) of the critical events ($OR_{E,2}$, $OR_{E,3}$), treatment costs ($C_{T,2}$, $C_{T,3}$), and probabilities of side effects ($P_{SE,2}$, $P_{SE,3}$). The resulting vector $\theta$ consists of 12 random variables $L$, $Q_E$, $Q_{SE}$, $C_E$, $C_{SE}$, $C_{T,2}$, $C_{T,3}$, $P_{E,1}$, $OR_{E,2}$, $OR_{E,3}$, $P_{SE,2}$, $P_{SE,3}$. The model contains only three constants $C_{T,1}, P_{SE,1}, \lambda$, while two parameters $P_{E,2}$ and $P_{E,3}$ are defined as functions of $P_{E,1}, OR_{E,2}$ and $P_{E,1}, OR_{E,3}$, respectively. We refer to Table~\ref{table:ades2004} for a detailed description on these model inputs. For each treatment $d\in D$, the net benefit is defined by
\begin{align*}
	f_d(\theta) &= P_{SE,d}P_{E,d}\left[\lambda\left(L\frac{1+Q_E}{2}-Q_{SE}\right)-(C_{SE}+C_E)\right] \\
		& \quad + P_{SE,d}(1-P_{E,d})\left[\lambda(L-Q_{SE})-C_{SE}\right] \\
		& \quad + (1-P_{SE,d})P_{E,d}\left[\lambda L\frac{1+Q_E}{2}-C_E\right] \\
		& \quad + (1-P_{SE,d})(1-P_{E,d})\lambda L-C_{T,d}.
\end{align*}
This net benefit, as in \cite{ALC04}, is multilinear, with the consequence that expected values of random variables can be plugged in to form estimates of the EVSI. However, the additional decision options, correlations, and larger number of parameters make the problem more realistic, and computationally challenging, than the original version. Repeating the standard Monte Carlo estimation with $10^8$ i.i.d.\ samples of $\theta$ 10 times, EVPI is estimated as 4,063.5 with the standard error equal to 0.66.
\begin{table}[t]
	\caption{Model inputs involved in our experiments. Note that $\text{log-normal}(\mu,\Sigma)$ and $\text{logit-normal}(\mu,\Sigma)$ denote the log-normal and logit-normal distributions, respectively, with $\mu$ and $\Sigma$ being the mean vector and the covariance matrix of the corresponding normal distribution, respectively. The logit function $\text{logit}(p)$ is the transformation mapping probability $p$ to the logarithm of the odds $\log(p/(1-p))$.}\label{table:ades2004}
	\small{ \begin{center} 
	\begin{tabular}{|l|l|l|}
		\hline
		Description & Parameter & Distribution \\
		\hline \hline
		Lifetime remaining & $L$ & $N(30, 25)$ \\ \hline
		QALY after critical event, per year & $Q_E$ & $\text{logit-normal}\left(0.6, 1/36\right)$ \\ \hline
		QALY decrement due to side effects & $Q_{SE}$ & $N(0.7, 0.01)$ \\ \hline
		Cost of critical event & $C_E$ & $N(2\times 10^5, 10^8)$ \\ \hline
		Cost of side effect & $C_{SE}$ & $N(10^5, 10^8)$ \\ \hline
		Cost of treatment $d=1$ & $C_{T,1}$ & 0 (constant) \\ \hline
		Cost of treatments $d=2,3$ & $C_{T,d}$ & \multirow{2}{*}{$N\left(\begin{pmatrix} 1.5\times 10^4 \\ 2\times 10^4\end{pmatrix}, \begin{pmatrix}300 & 100 \\ 100 & 500\end{pmatrix}\right)$} \\
		& & \\ \hline
		Probability of critical event & $P_{E,1}$ & $\text{Beta}(15, 85)$ \\
		on treatment $d=1$ & &  \\ \hline
		Odds ratios of critical event & $OR_{E,d}$ & \multirow{2}{*}{$\text{log-normal}\left(\begin{pmatrix} -1.5 \\ -1.75\end{pmatrix}, \begin{pmatrix}0.11 & 0.02 \\ 0.02 & 0.06\end{pmatrix}\right)$} \\
		relative to treatment $d=1$ & &  \\
		$(P_{E,d}/(1-P_{E,d}))/(P_{E,1}/(1-P_{E,1}))$ & & \\ \hline
		Probability of critical event & $P_{E,d}$ & Derived from\\
		on treatments $d=2,3$ & & $P_{E,1}$ and $OR_{E,d}$\\ \hline
		Probability of side effect & $P_{SE,1}$ & 0 (constant) \\
		on treatment $d=1$ & & \\ \hline
		Probability of side effect & $P_{SE,d}$ & \multirow{2}{*}{$\text{logit-normal}\left(\begin{pmatrix} -1.4 \\ -1.1\end{pmatrix}, \begin{pmatrix}0.10 & 0.05 \\ 0.05 & 0.25\end{pmatrix}\right)$} \\
		 on treatments $d=2,3$ & & \\ \hline
		Monetary value of 1 QALY & $\lambda$ & \$75,000 (constant) \\
		\hline
	\end{tabular} \end{center} }
\end{table}

Our final modification to improve practical relevance is to assume an annual population who experience this disease as 2,500, a technology horizon of 10 years, and an annual discount factor of 1.035, giving a total discounted population that will benefit from sampling as 21,519. Estimated per-person EVSI is multiplied by this factor to get a population EVSI which can be compared with experiment costs.

We consider three hypothetical experiment designs for which we want to estimate the EVSI. Unlike \cite{ALC04,SOBB15}, all experiments inform more than one parameter and we consider cases where the parameters being studied (subset of $\theta$ informed by sample information $Y$) are correlated with those not being studied (subset of $\theta$ not informed by $Y$). These modifications again improve the practical relevance of the example.
\begin{enumerate}
\item An observational study on side effects in a cohort of $n_p=100$ patients on treatment $d=2$, which informs three parameters $Q_{SE}$, $C_{SE}$ and $P_{SE,2}$. The sample information $Y$ in this scenario, denoted by Scenario~1, is defined as the three-dimensional vector $(Y_1,Y_2,Y_3)$, where $Y_1\sim N(Q_{SE}, 4/n_p)$, $Y_2\sim N(C_{SE}, 10^4/n_p)$ and $Y_3\sim \mathrm{Binomial}(n_p, P_{SE,2})$. The hypothetical study costs are \$50,000 for set-up and \$250 for each patient giving a total of \$75,000.
\item A small two-arm randomized controlled trial (RCT) comparing treatments $d=1$ and $d=3$ with $n_p=100$ patients, which informs three parameters $OR_{E,3}$, $C_{T,3}$ and $P_{SE,3}$. The sample information $Y$ in this scenario, denoted by Scenario~2, is defined as the three-dimensional vector $(Y_1,Y_2,Y_3)$, where $Y_1\sim N(\log(OR_{E,3}), 4/n_p)$, $Y_2\sim N(C_{T,3}, 10^4/n_p)$ and $Y_3\sim \mathrm{Binomial}(n_p, P_{SE,3})$. This hypothetical RCT costs \$200,000 for set-up plus \$1000 for each randomized patient, giving a total of \$400,000.
\item A larger multicentre two-arm randomized controlled trial comparing treatments $d=1$ and $d=3$ with $n_p=1000$ patients, which informs six parameters $P_{E,1}$, $OR_{E,3}$, $C_{T,3}$, $P_{SE,3}$, $C_{SE}$ and $C_E$. The sample information $Y$ in this scenario, denoted by Scenario~3, is defined as the six-dimensional vector $(Y_1,\ldots,Y_6)$, where $Y_1\sim \mathrm{Binomial}(n_p, P_{E,1})$, $Y_2\sim N(\log(OR_{E,3}), 4/n_p)$, $Y_3\sim N(C_{T,3}, 10^4/n_p)$, $Y_4\sim \mathrm{Binomial}(n_p, P_{SE,3})$, $Y_5\sim N(C_{SE}, 10^4/n_p)$ and $Y_6\sim N(C_{E}, 10^4/n_p)$. This hypothetical multicentre RCT costs \$200,000 set-up, \$1500 for each randomized patient, and \$50,000 for each additional 100 patients added to the study (owing to the cost of establishing a new centre). The total cost is \$4,200,000.
\end{enumerate}

\subsection{Results and discussion}
Throughout our experiments below, we always set $M_0=16$. Here we refer to \cite[p.~516]{GH19} for the discussion on how to choose $M_0$ to minimise the total complexity of the MLMC estimator. Except for $P_{SE,2}$ for Scenario~1 and $P_{SE,3}$ for Scenarios 2 \& 3, the exact posterior distributions of the relevant random input parameters given $Y$ are available and used as importance distributions. Regarding $P_{SE,2}$ for Scenario~1, we approximate the prior distribution by the beta distribution with the same mean and variance, for which the exact posterior distribution given $Y_3$ is available and can be used as an importance distribution. We employ the same approach for $P_{SE,3}$ for Scenarios 2 \& 3.

Following the MLMC implementation by Giles \cite[Section~3]{G15}, we first conduct convergence tests on a sequence of the correction variables $\Delta P_\ell$ and then estimate $\EVPI-\EVSI$ for various values of the root-mean-square accuracy $\varepsilon$. The required computational costs for our MLMC estimator are compared with those for the standard nested Monte Carlo estimator of the form
\[ \frac{1}{N}\sum_{n=1}^{N}P_L^{(n)}, \]
for the same maximum level $L$ with the MLMC estimator. Here we note that this nested Monte Carlo estimator is different from that given in \eqref{eq:nmc}. We also note that function approximation techniques \cite{SOBB15,HMB16,JA18} may be applied to this example. However, performance comparison with the nested MCMC estimator or function approximation-based estimators is beyond the scope of this article and will be addressed in future work.

The results for Scenario~1 are shown in Figure~\ref{fig:result1}. In the left top panel, the variances of both $P_\ell$ and $\Delta P_\ell$ are plotted as functions of the level $\ell$. Here the variances are estimated empirically by using $10^5$ i.i.d.\ samples at each level. While the variance of $P_\ell$ takes an almost constant value, the variance of $\Delta P_\ell$ decreases geometrically as the level increases. The linear regression of the data for the range $2\leq \ell\leq 8$ provides an estimation of $\beta$ as 1.62, which is slightly better than what we expect from the theoretical analysis. Using the same i.i.d.\ samples of $P_\ell$ and $\Delta P_\ell$, we also estimate their absolute mean values, which are plotted as functions of $\ell$ in the right top panel. The absolute mean value of $P_\ell$ takes an almost constant value, whereas the absolute mean value of $\Delta P_\ell$ decreases geometrically as the level increases. The linear regression of the data for the range $2\leq \ell\leq 8$ provides an estimation of $\alpha$ as 1.16, which again is slightly better than our theoretical result. These convergence results confirm that our MLMC estimator is in the first regime of Theorem~\ref{thm:mlmc} for this scenario.
\begin{figure}[t]
\centering
\includegraphics[width=0.7\textwidth]{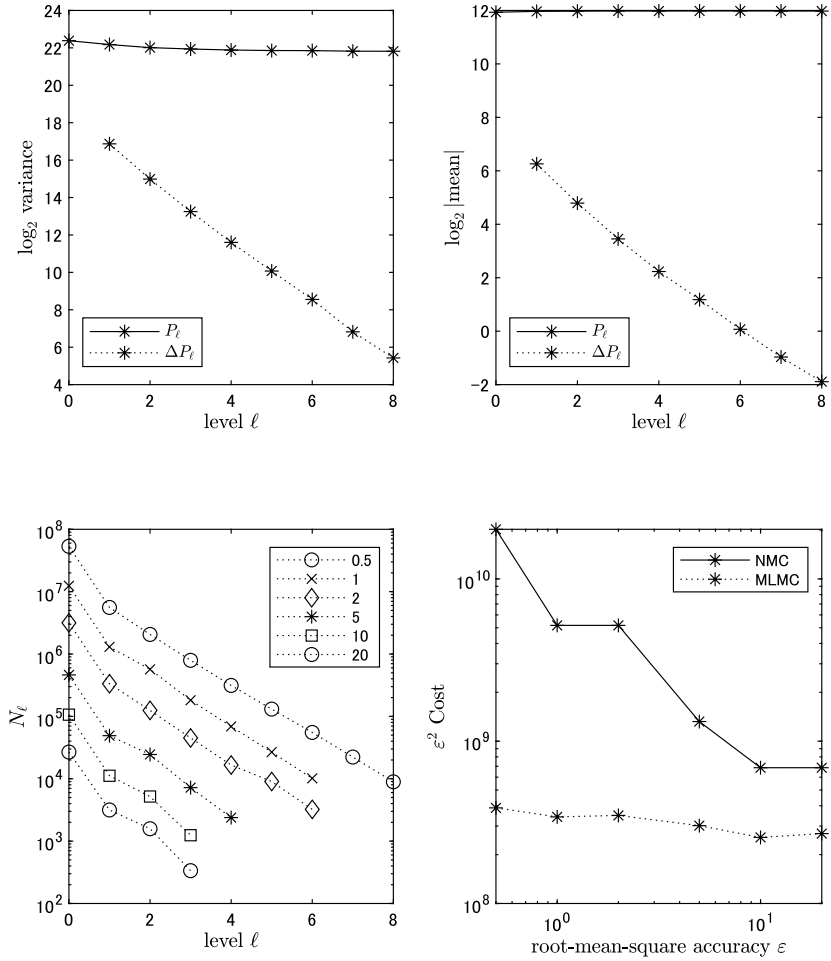}
\caption{MLMC results for Scenario~1}
\label{fig:result1}
\end{figure}

For a given root-mean-square accuracy $\varepsilon$, the necessary maximum level $L$ and the corresponding numbers of samples $N_\ell$ for $\ell=0,\ldots,L$ are estimated by using \cite[Algorithm~1]{G15}.  Each line in the left bottom panel of Figure~\ref{fig:result1}  shows the values of $N_\ell$ for a particular value of $\varepsilon\in \{0.5, 1, 2, 5, 10, 20\}$. It can be seen that the required maximum level $L$ increases as $\varepsilon$ decreases so that the bias becomes small enough. For each fixed $\varepsilon$, the number of samples $N_\ell$ deceases geometrically with the level, and most of the samples are allocated on the lower levels. In fact, the optimal allocation of $N_\ell$ is given by $N_\ell\propto \varepsilon^{-2}2^{-(\beta+\gamma)\ell/2}$ \cite{G15}, and our experimental results agree quite well with this. 

It should be noted that the values of both $\alpha$ and $\beta$ are estimated on-the-fly as the computation is performed, with the value of $\alpha$ being used to determine when the bias $\EE[P-P_L]$ has converged sufficiently as $L$ increases. Further details on the MLMC software implementation are available in \cite{G15}.

The total computational cost for the MLMC estimator is given by the sum $\sum_{\ell=0}^{L}2^\ell N_\ell=:C_{\MLMC}$ and it is expected from Theorem~\ref{thm:mlmc} and the above convergence results that $\varepsilon^2C_{\MLMC}$ is independent of $\varepsilon$. On the other hand, the total computational cost for the nested Monte Carlo estimator, denoted by $C_{\NMC}$, is of order $\varepsilon^{-2-1/\alpha}$ theoretically, so that $\varepsilon^2C_{\NMC}$ should be proportional to $\varepsilon^{-1/\alpha}$ and depends strongly on $\varepsilon$.  Based on this observation, the values of $\varepsilon^2C_{\MLMC}$ and $\varepsilon^2C_{\NMC}$ are plotted as functions of $\varepsilon$ with dashed and solid lines, respectively, in the right bottom panel. As expected, $\varepsilon^2C_{\NMC}$ increases significantly as $\varepsilon$ decreases, while $\varepsilon^2C_{\MLMC}$ takes an almost constant value. As a consequence, the superiority of the MLMC estimator becomes more remarkable for smaller $\varepsilon$. For the smallest value $\varepsilon=0.5$ in this experiment, the MLMC estimator achieves computational saving of factor 52. This factor is expected to get even larger if we set a smaller value for $\varepsilon$.

We obtain similar results for Scenarios~2 \& 3 as shown in Figures~\ref{fig:result2} and \ref{fig:result3}, respectively. The estimated values of $\alpha$ and $\beta$ are 0.99 and 1.48, respectively, for both these scenarios, which agrees quite well with our theoretical finding. The MLMC estimator achieves computational saving of factors 35 and 118 for the smallest value $\varepsilon=0.5$. The estimates of the difference $\EVPI-\EVSI$ are 4,039, 3,033 and 2,277 for three scenarios, respectively, with the root-mean-square accuracy equal to 0.5. Subtracting these values from the estimate of EVPI, which is 4,064, the per-person EVSI are estimated as 25, 1,031 and 1,787, respectively. Although we round the estimates to the nearest integers here for simplicity, each EVSI is estimated with the root-mean-square accuracy at most $0.66+0.5=1.06$. As comparison, by using the MLMC estimator for EVPPI \cite{GG19}, the per-person EVPPI for $(Q_{SE}, C_{SE}, P_{SE,2})$, $(OR_{E,3}, C_{T,3}, P_{SE,3})$ and $(P_{SE,1}, OR_{E,3}, C_{T,3}, P_{SE,3}, C_{SE}, C_{E})$ are estimated as 86, 1,400 and 1,914, each of which is certainly larger than the corresponding EVSI, confirming face validity of our EVSI estimates.

Multiplying the EVSI estimates by the discounted population that will benefit from sampling gives the population EVSI of \$537,975‬, \$22,186,089, and \$38,454,453 (with the root-mean-square accuracy at most $1.06\times 21,519=22,810$) for Scenarios~1, 2, and 3, respectively, which are all larger than their hypothetical costs and are thus cost-effective. Subtracting the study costs gives the expected net benefit of sampling (ENBS) \cite{WSCAAbook}. In our three scenarios, The ENBS values are estimated as \$462,975, \$21,786,089, and \$34,254,453, respectively. The ENBS for the third experiment, the large multicenter randomized controlled trial, is the highest and is thus the study that should be funded.

\begin{figure}[t]
\centering
\includegraphics[width=0.7\textwidth]{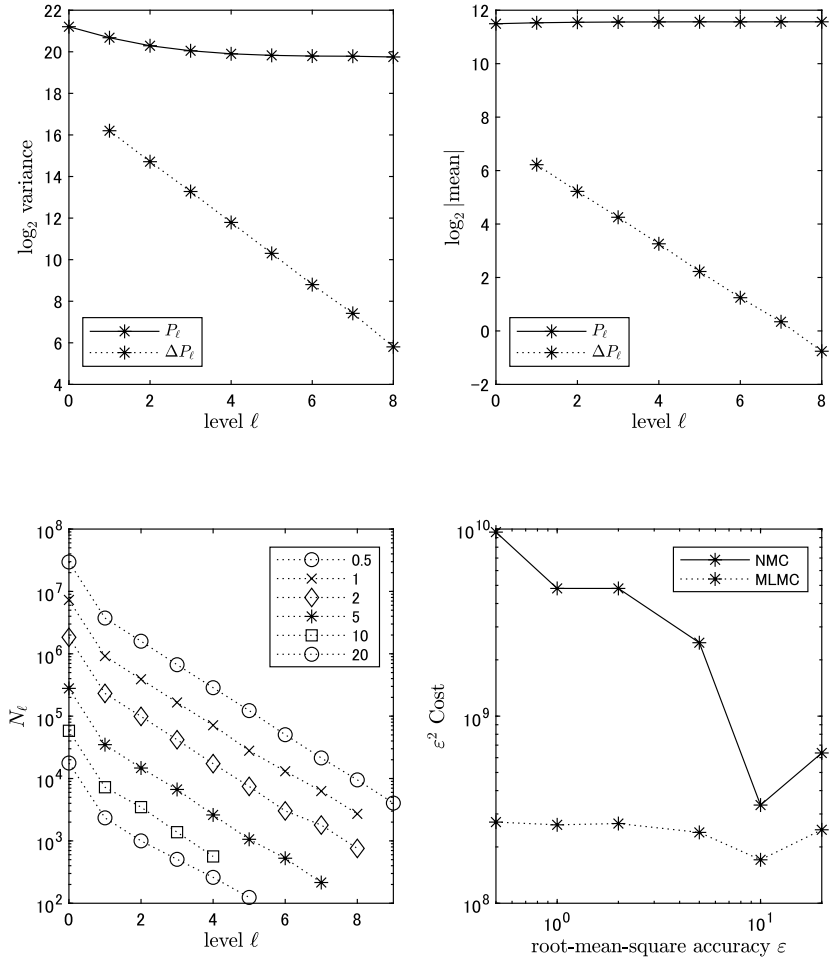}
\caption{MLMC results for Scenario~2}
\label{fig:result2}
\end{figure}
\begin{figure}[t]
\centering
\includegraphics[width=0.7\textwidth]{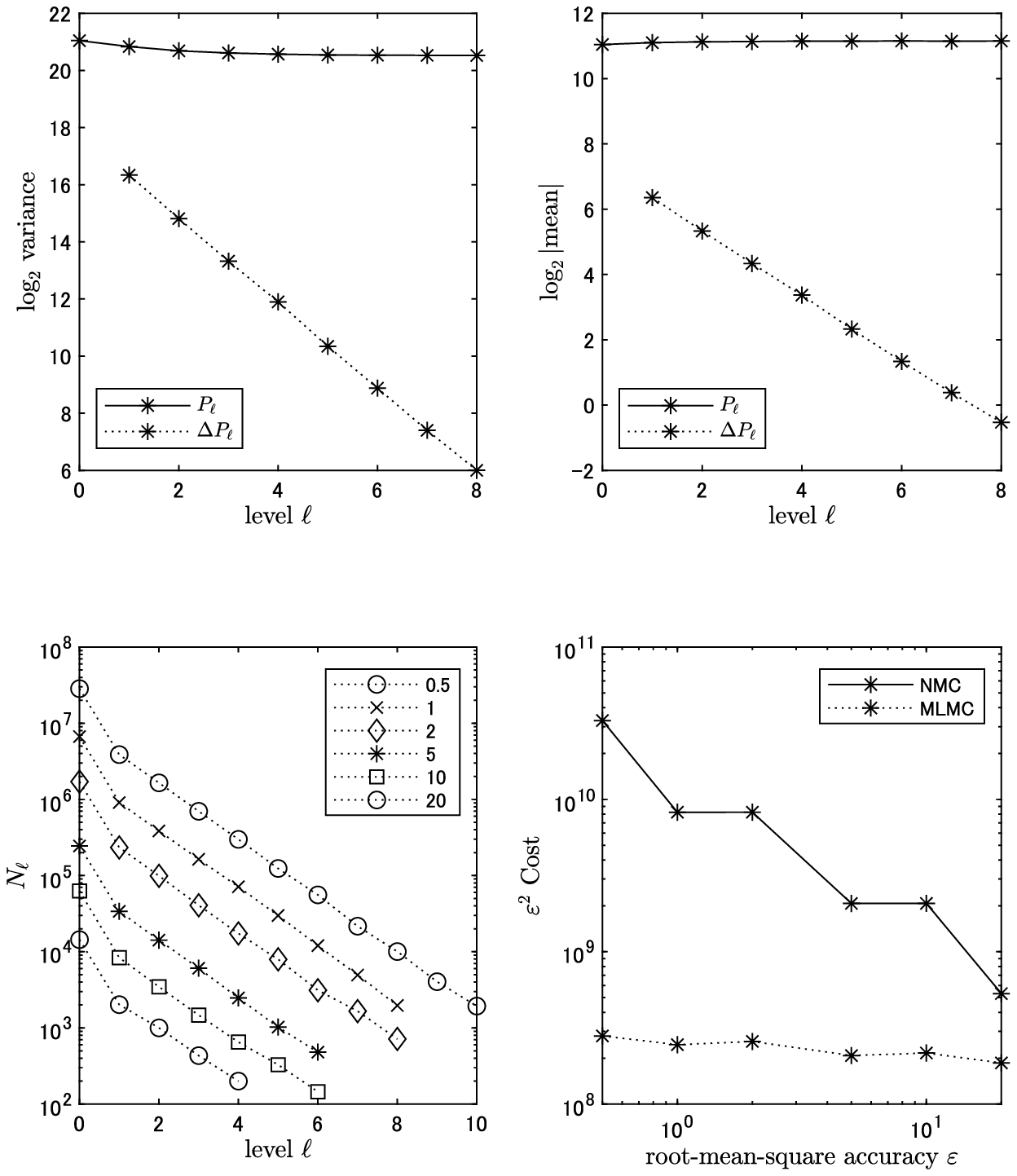}
\caption{MLMC results for Scenario~3}
\label{fig:result3}
\end{figure}

\section{Conclusions}\label{sec:concl}
In this paper, we have developed a multilevel Monte Carlo estimator for EVSI, the expected value of sample information. The key difference from the multilevel estimation for EVPPI, as studied in \cite{GG19}, is to use Bayes' theorem directly to rewrite the expectation with respect to the posterior distribution of input random variables $\theta$ given an observation $Y$ into the ratio of the expectations with respect to the prior distribution of $\theta$, and then to estimate each of the expectations by using the same i.i.d.\ samples of $\theta$. As shown in Lemma~\ref{lem:aux}, we prove under Assumptions~\ref{assm:1} and \ref{assm:4} that the nested ratio expectation can be efficiently estimated by using the antithetic multilevel Monte Carlo estimator. Plugging this result into a slightly generalized version of \cite[Theorem~3]{GG19}, our antithetic multilevel estimator is shown to achieve a root-mean-square accuracy $\varepsilon$ at a cost of optimal $O(\varepsilon^{-2})$. As mentioned in Remark~\ref{rem:non-antithetic}, without the antithetic technique, we can only expect a suboptimal result for the overall computational complexity. Our theoretical analysis is supported by numerical experiments.

In future work, following the idea from \cite{BDM11,GH19}, we will examine the use of an adaptive number of inner samples for $\theta$ with an aim to bring about further computational savings for estimating EVSI. For the outer samples of $Y$ which are far away from the decision manifold $K$, a smaller number of inner samples than $M_02^\ell$ may be sufficient to ensure that
\[ \arg\max_{d\in D}\overline{g_d}=\arg\max_{d\in D}\overline{g_d}^{(a)}=\arg\max_{d\in D}\overline{g_d}^{(b)}=d_\opt . \]
holds with high probability. Thus it is only the outer samples of $Y$ which are close to $K$ that require great accuracy for estimating the inner conditional expectation. In this way we expect to reduce the value of $\gamma$ while maintaining the fast decay of $\EE[|\Delta P_\ell|]$ and $\VV[\Delta P_\ell]$ as derived in Theorem~\ref{thm:main}. This should reduce the total computational cost by some constant factor, approximately independent of the desired accuracy.

\section*{Acknowledgements}
The authors would like to thank Abdul-Lateef Haji-Ali of Heriot-Watt University for useful discussions and comments on the earlier version of this article. HT was supported by the Medical Research Council grant number MR/S036709/1.

\bibliographystyle{siamplain}
\bibliography{references}

\begin{thebibliography}{10}

\bibitem{ALC04}
{\sc A.~E. Ades, G.~Lu, and K.~Claxton}, {\em Expected value of sample
  information calculations in medical decision modeling}, Medical Decision
  Making, 24 (2004), pp.~207--227.

\bibitem{BKOC07}
{\sc A.~Brennan, S.~Kharroubi, A.~O'Hagan, and J.~Chilcott}, {\em Calculating
  partial expected value of perfect information via {M}onte {C}arlo sampling
  algorithms}, Medical Decision Making, 27 (2007), pp.~448--470.

\bibitem{BDM11}
{\sc M.~Broadie, Y.~Du, and C.~Moallemi}, {\em Efficient risk estimation via
  nested sequential simulation}, Management Science, 57 (2011), pp.~1172--1194.

\bibitem{BHR15}
{\sc K.~Bujok, B.~Hambly, and C.~Reisinger}, {\em Multilevel simulation of
  functionals of {B}ernoulli random variables with application to basket credit
  derivatives}, Methodology and Computing in Applied Probability, 17 (2015),
  pp.~579--604.

\bibitem{CGST11}
{\sc K.~A. Cliffe, M.~B. Giles, R.~Scheichl, and A.~L. Teckentrup}, {\em
  Multilevel {M}onte {C}arlo methods and applications to elliptic {P}{D}{E}s
  with random coefficients}, Computing and Visualization in Science, 14 (2011),
  pp.~3--15.

\bibitem{DGLS18}
{\sc J.~Dick, R.~N. Gantner, Q.~T.~L. Gia, and C.~Schwab}, {\em Multilevel
  higher-order quasi-{M}onte {C}arlo {B}ayesian estimation}, Mathematical
  Models and Methods in Applied Sciences, 27 (2018), pp.~953--995.

\bibitem{GP18}
{\sc R.~N. Gantner and M.~D. Peters}, {\em Higher-order quasi--{M}onte {C}arlo
  for {B}ayesian shape inversion}, SIAM/ASA Journal on Uncertainty
  Quantification, 6 (2018), pp.~707--736.

\bibitem{G08}
{\sc M.~B. Giles}, {\em Multilevel {M}onte {C}arlo path simulation}, Operations
  Research, 56 (2008), pp.~607--617.

\bibitem{G15}
{\sc M.~B. Giles}, {\em Multilevel {M}onte {C}arlo {m}ethods}, Acta Numerica,
  24 (2015), pp.~259--328.

\bibitem{G18}
{\sc M.~B. Giles}, {\em {M}{L}{M}{C} for nested expectations}.
\newblock in: Contemporary Computational Mathematics - A Celebration of the
  80th Birthday of Ian Sloan, pp.~425--442, Springer, 2018.

\bibitem{GG19}
{\sc M.~B. Giles and T.~Goda}, {\em Decision-making under uncertainty: Using
  {M}{L}{M}{C} for efficient estimation of {E}{V}{P}{P}{I}}, Statistics and
  Computing, 29 (2019), pp.~739--751.

\bibitem{GH19}
{\sc M.~B. Giles and A.~L. Haji-Ali}, {\em Multilevel nested simulation for
  efficient risk estimation}, SIAM/ASA Journal on Uncertainty Quantification, 7
  (2019), pp.~497--525.

\bibitem{GHIxx}
{\sc T.~Goda, T.~Hironaka, and T.~Iwamoto}, {\em Multilevel {M}onte {C}arlo
  estimation of expected information gains}, Stochastic Analysis and
  Applications,  (2020), pp.~1--20.

\bibitem{HMB16}
{\sc A.~Heath, I.~Manolopoulou, and G.~Baio}, {\em Estimating the expected
  value of partial perfect information in health economic evaluations using
  integrated nested {L}aplace approximation}, Statistics in Medicine, 35
  (2016), pp.~4264--4280.

\bibitem{JA18}
{\sc H.~Jalal and F.~Alarid-Escudero}, {\em A {G}aussian approximation approach
  for value of information analysis}, Medical Decision Making, 38 (2018),
  pp.~174--188.

\bibitem{JO10}
{\sc A.~Joulin and Y.~Ollivier}, {\em Curvature, concentration and error
  estimates for {M}arkov chain {M}onte {C}arlo}, Annals of Probability, 38
  (2010), pp.~2418--2442.

\bibitem{LMN13}
{\sc K.~\L{a}tuszy\'{n}ski, B.~Miasojedow, and W.~Niemiro}, {\em Nonasymptotic
  bounds on the estimation error of {M}{C}{M}{C} algorithms}, Bernoulli, 19
  (2013), pp.~2033--2066.

\bibitem{Lbook}
{\sc J.~S. Liu}, {\em Monte Carlo Strategies in Scientific Computing},
  Springer, 2004.

\bibitem{M16}
{\sc N.~A. Menzies}, {\em An efficient estimator for the expected value of
  sample information}, Medical Decision Making, 36 (2016), pp.~308--320.

\bibitem{RCbook}
{\sc C.~P. Robert and G.~Casella}, {\em Monte Carlo Statistical Methods},
  Springer, 2nd~ed., 2004.

\bibitem{SST17}
{\sc R.~Scheichl, A.~M. Stuart, and A.~L. Teckentrup}, {\em Quasi-{M}onte
  {C}arlo and multilevel {M}onte {C}arlo methods for computing posterior
  expectations in elliptic inverse problems}, SIAM/ASA Journal on Uncertainty
  Quantification, 5 (2017), pp.~493--518.

\bibitem{SS13}
{\sc C.~Schillings and C.~Schwab}, {\em Sparse, adaptive {S}molyak quadratures
  for {B}ayesian inverse problems}, Inverse Problems, 29 (2013), pp.~1--28.

\bibitem{SS16}
{\sc C.~Schillings and C.~Schwab}, {\em Scaling limits in computational
  {B}ayesian inversion}, ESAIM: Mathematical Modelling and Numerical Analysis,
  50 (2016), pp.~1825--1856.

\bibitem{SSWxx}
{\sc C.~Schillings, B.~Sprungk, and P.~Wacker}, {\em On the convergence of the
  {L}aplace approximation and noise-level-robustness of {L}aplace-based {M}onte
  {C}arlo methods for {B}ayesian inverse problems}, Feb. 2020,
  \url{https://arxiv.org/abs/1901.03958v3}.

\bibitem{SOBB15}
{\sc M.~Strong, J.~E. Oakley, A.~Brennan, and P.~Breeze}, {\em Estimating the
  expected value of sample information using the probabilistic sensitivity
  analysis sample: A fast, nonparametric regression-based method}, Medical
  Decision Making, 35 (2015), pp.~570--583.

\bibitem{S10}
{\sc A.~M. Stuart}, {\em Inverse problems: A {B}ayesian perspective}, Acta
  Numerica, 19 (2010), pp.~451--559.

\bibitem{WSCAAbook}
{\sc N.~J. Welton, A.~J. Sutton, N.~J. Copper, K.~R. Abrams, and A.~E. Ades},
  {\em Evidence Synthesis for Decision Making in Healthcare}, Wiley, 2012.

\end{thebibliography}

\end{document}